\numberwithin{equation}{section}
\numberwithin{equation}{section}
\newtheorem{introtheorem}{Theorem}
\newtheorem{introdefinition}[introtheorem]{Definition}
\newtheorem{theorem}{Theorem}[section]
\newtheorem{lemma}[theorem]{Lemma}
\newtheorem{proposition}[theorem]{Proposition}
\newtheorem*{theorem*}{Theorem}
\theoremstyle{definition}
\newtheorem{definition}[theorem]{Definition}
\newtheorem*{definition*}{Definition}
\theoremstyle{remark}
\theoremstyle{remark}
\newtheorem{remark}[theorem]{Remark}
\newcommand{\cG}{\mathcal{G}}
\newcommand{\cR}{\mathcal{R}}
\newcommand{\cB}{\mathcal{B}}
\newcommand{\cS}{\mathcal{S}}
\newcommand{\op}{\text{op}}
\newcommand{\zero}{^{(0)}}
\DeclareMathOperator*{\esssup}{ess\,sup}
\begin{document}

\title{Kesten's criterion for discrete probability measure-preserving groupoids}

\author{Soham Chakraborty}
\author{Milan Donvil}
\author{Felipe Flores}
\author{Mario Klisse}

\address{Départment de Mathématiques et Applications, École Normale Supérieure, 45 Rue d'Ulm, 75005 Paris, France}

\email{soham.chakraborty@ens.psl.eu}

\email{milan.donvil@ens.psl.eu}

\address{Department of Mathematics, University of Virginia, Kerchof Hall 114. 141 Cabell Dr, Charlottesville, Virginia, United States}

\email{hmy3tf@virginia.edu}

\address{Department of Mathematics, Christian-Albrechts University Kiel,\\ Heinrich-Hecht-Platz 6,
Kiel, Germany}

\email{klisse@math.uni-kiel.de}

\date{\today. \\ 2020 \emph{Mathematics Subject Classification.} Primary: 37A20, Secondary: 37A15, 37A30.}

\begin{abstract}
Inspired by Kesten’s criterion for the amenability of groups, we establish a characterization of the amenability of discrete probability measure-preserving groupoids in terms of the operator norms of symmetric invariant Markov operators.  
\end{abstract}

\maketitle

%\vspace{3mm}

\section{Introduction}

\vspace{3mm}

Amenability of groups, introduced by von Neumann in \cite{Neumann1929}, is one of the central notions in modern group theory, probability theory, ergodic theory, and functional analysis. A group is called \emph{amenable} if it possesses an invariant mean. This concept admits numerous equivalent characterizations; for instance, via the existence of \emph{Følner sequences} (see \cite{Folner1955}), the \emph{nuclearity} of the group's reduced group C$^{\ast}$-algebra (see \cite{Lance1973}), the existence of approximately invariant sequences of probability measures (see \cite{Reiter1968}), or the existence of fixed points for continuous affine actions on compact convex subsets of locally convex topological vector spaces (see \cite{Day1956}). For further characterizations, see \cite{Pier1984}.

In his seminal work \cite{Kesten1959a} (see also \cite{Kesten1959b}), Kesten studied random walks on Cayley graphs of finitely generated groups and obtained yet another characterization of amenability, relating it to the decay of return probabilities to the identity. More precisely, if the support of a given random walk generates the group, then the probability of returning to the identity in $2n$ steps decays exponentially if and only if the group is non-amenable.

Kesten's criterion has been proven useful in several contexts, see e.g. \cite{Kesten1959b,KaimanovichVershik83,Ol05,ST10, BV25}. It can be formulated naturally in terms of the spectral radius of the Markov operator associated with the random walk, in a way that we explicitly cite now.

\begin{theorem*}[Kesten's criterion \cite{Kesten1959a}] \label{KestenCriterion} Let $G$ be a countable discrete group, let $\mu$ be a symmetric probability measure on $G$ whose support generates $G$, and let $P_{\mu}$ be the associated Markov operator on $\ell^{2}(G)$ defined by $(P_{\mu}\xi)(g) := \sum_{h\in G} \xi(gh)\,\mu(h)$ for $\xi\in\ell^{2}(G)$,  $g\in G$. Then $G$ is amenable if and only if the spectral radius of $P_{\mu}$ is equal to $1$. \end{theorem*}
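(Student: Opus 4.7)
The plan is to prove the two implications separately, handling the easy direction via a Følner sequence and the hard direction by extracting approximately invariant vectors from the spectral hypothesis and then applying Namioka's $\ell^{2}$-to-$\ell^{1}$ trick. Throughout I write $\rho_{h}$ for the right translation $(\rho_{h}\xi)(g)=\xi(gh)$, so that $P_{\mu}=\sum_{h}\mu(h)\rho_{h}$; the symmetry $\mu(h)=\mu(h^{-1})$ together with $\rho_{h}^{\ast}=\rho_{h^{-1}}$ makes $P_{\mu}$ self-adjoint, and the triangle inequality gives $\|P_{\mu}\|\le\|\mu\|_{1}=1$.

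For the forward direction, I assume $G$ is amenable and choose a (right) Følner sequence $(F_{n})$. Setting $\xi_{n}=|F_{n}|^{-1/2}\mathbf{1}_{F_{n}}$, a short computation yields $\langle P_{\mu}\xi_{n},\xi_{n}\rangle=\sum_{h}\mu(h)\,|F_{n}\cap F_{n}h^{-1}|/|F_{n}|$, which tends to $1$ by dominated convergence, since each summand is bounded by $1$ and tends pointwise to $1$ by the Følner property. Combined with $\|P_{\mu}\|\le 1$ and self-adjointness (so spectral radius equals norm), this gives spectral radius equal to $1$.

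For the converse, suppose the spectral radius equals $1$, so $\|P_{\mu}\|=1$. The goal is to extract unit vectors $\xi_{n}\ge 0$ with $\langle P_{\mu}\xi_{n},\xi_{n}\rangle\to 1$. Starting from any near-maximizing sequence, the positivity reduction $|\langle P_{\mu}\xi,\xi\rangle|\le\langle P_{\mu}|\xi|,|\xi|\rangle$, which uses that $P_{\mu}$ has non-negative matrix coefficients, permits replacing $\xi_{n}$ by $|\xi_{n}|$. For such non-negative $\xi_{n}$ the identity
\[
\sum_{h\in G}\mu(h)\,\|\rho_{h}\xi_{n}-\xi_{n}\|^{2} \;=\; 2\bigl(1-\langle P_{\mu}\xi_{n},\xi_{n}\rangle\bigr)
\]
tends to zero, so for each $h\in\mathrm{supp}(\mu)$ the individual term $\mu(h)\|\rho_{h}\xi_{n}-\xi_{n}\|^{2}$ also tends to zero, forcing $\|\rho_{h}\xi_{n}-\xi_{n}\|\to 0$. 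The cocycle identity $\rho_{gh}\xi-\xi=\rho_{g}(\rho_{h}\xi-\xi)+(\rho_{g}\xi-\xi)$, together with the assumption that $\mathrm{supp}(\mu)$ generates $G$, then propagates almost invariance to every element of $G$.

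To conclude, I will apply Namioka's trick: the probability vectors $\eta_{n}=\xi_{n}^{2}\in\ell^{1}(G)$ satisfy $\|\rho_{g}\eta_{n}-\eta_{n}\|_{1}\le\|\rho_{g}\xi_{n}-\xi_{n}\|_{2}\cdot\|\rho_{g}\xi_{n}+\xi_{n}\|_{2}\le 2\|\rho_{g}\xi_{n}-\xi_{n}\|_{2}\to 0$, using the factorization $a^{2}-b^{2}=(a-b)(a+b)$ and Cauchy--Schwarz. This is Reiter's condition $(P_{1})$ and therefore equivalent to amenability of $G$. The principal obstacle is the positivity reduction $\xi_{n}\mapsto|\xi_{n}|$ in the converse: without non-negativity, the inner products $\langle\rho_{h}\xi_{n},\xi_{n}\rangle$ might be complex or signed and the clean identity above would break down. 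Once that reduction is secured, everything downstream consists of short estimates and standard equivalences.
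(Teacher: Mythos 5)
Your proof is correct. Note that the paper does not actually prove this statement: it is quoted as a classical result of Kesten and used only as motivation, so there is no in-paper argument to compare against line by line. What you have written is the standard modern proof via weak containment of the trivial representation: Følner sets give near-maximizing vectors in the easy direction, and in the converse the self-adjointness plus the positivity reduction $|\langle P_{\mu}\xi,\xi\rangle|\le\langle P_{\mu}|\xi|,|\xi|\rangle$ yields non-negative almost-invariant vectors, which Namioka's squaring trick converts into Reiter's condition. All the individual steps check out: the identity $\sum_{h}\mu(h)\|\rho_{h}\xi_{n}-\xi_{n}\|^{2}=2(1-\langle P_{\mu}\xi_{n},\xi_{n}\rangle)$ is valid for real non-negative unit vectors, the cocycle estimate propagates almost invariance over the generated group (symmetry of $\mu$ makes the generating set symmetric, so no issue with inverses), and spectral radius equals norm by self-adjointness. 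It is worth observing that your converse is structurally the same argument the paper runs at the groupoid level in Theorem~\ref{MainTheorem}: there, $\|P^{\pi}_{E}\|=1$ is converted into almost-invariant vectors and then invariant means via the cited results of Hayes, in place of your explicit Namioka step, while the ``amenable $\Rightarrow$ norm one'' direction uses the weak Godement sequence of Proposition~\ref{amAD} exactly as you use a Følner sequence.
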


Over time, amenability has found natural extensions in many other areas of mathematics. In particular, Zimmer introduced an analog for discrete group actions and countable measured equivalence relations in \cite{Zimmer1977a, Zimmer1977b, Zimmer1978} in the late seventies; soon after, Renault extended the notion to general measured groupoids \cite{Renault1980} (see also \cite{Anantharaman-DelarocheRenault2000, Corlette2004}).

As in the case of groups, Renault’s amenability for measured groupoids admits several equivalent formulations (see, e.g., \cite{Anantharaman-DelarocheRenault2000}). In \cite{Ka05}, Kaimanovich introduced the \emph{fiberwise Liouville property} for measured groupoids, formulated in terms of invariant Markov operators acting fiberwise with respect to a Haar system. A measured groupoid equipped with such an invariant Markov operator is called \emph{fiberwise Liouville} if almost all fiberwise actions admit no non-trivial bounded harmonic functions. A strengthened version of this notion, adapting the classical Choquet--Deny property to the discrete measured groupoid setting, was developed in \cite{Berendschot2024}. Kaimanovich showed that a measured groupoid which admits an invariant fiberwise Liouville Markov operator is amenable and conjectured that the reverse implication should hold as well, which was confirmed by Chu and Li in \cite{Li2018} (see also \cite{BuKa21}). This yields a groupoid analog of the characterization of amenability of groups in terms of bounded harmonic functions, proven in one direction and conjectured by Furstenberg \cite{Furstenberg73} and proven in the other direction independently by  Kaimanovich–Vershik \cite{VershikKaimanovich79, KaimanovichVershik83} and Rosenblatt \cite{Ros81}.

Analogs of Kesten’s criterion have been formulated in a variety of contexts, including invariant random subgroups \cite{AGV14}, group extensions of topological Markov chains \cite{Stadlbauer2013}, and quantum groups \cite{Banica1999}. Building on Kaimanovich’s framework of invariant Markov operators on groupoids, the goal of this article is to establish an analog of Kesten’s criterion in the setting of discrete probability measure-preserving measured groupoids. To this end, denote by $P^\pi$ the Markov operator associated with a Borel field of probability measures $\pi$ (see Subsection \ref{MarkovDefinition}). We introduce the following definition.

\begin{introdefinition}[see Definition \ref{KestenDefinition}]\label{maindefinition} Let $(\mathcal{G},\mu)$ be a discrete probability measure-preserving groupoid. We say that $(\mathcal{G},\mu)$ satisfies \emph{Kesten’s criterion} if, for every symmetric Borel field of probability measures $\pi$, the restriction of the Markov operator $P^\pi$ to any invariant Borel subset $E \subseteq \mathcal{G}^{(0)}$ with $\mu(E)>0$ has operator norm equal to $1$.
\end{introdefinition}

Whereas the original Kesten's criterion stated above shows that for groups it suffices that a single Markov operator associated with a symmetric non-degenerate measure has norm one in order to deduce amenability, this is no longer true for general measured groupoids, as observed in \cite{Kai01}. Instead, we obtain the following characterization, whose proof relies on Hayes' approach in \cite{Ha24} (see also \cite{AFH24}). 

\begin{introtheorem}\label{MainTheorem} 
Let $(\mathcal{G},\mu)$ be a discrete probability measure-preserving groupoid. Then $(\mathcal{G},\mu)$ is amenable if and only if it satisfies Kesten’s criterion. \end{introtheorem}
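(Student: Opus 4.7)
The plan is to prove the two directions of the equivalence separately.

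\emph{Amenability implies Kesten's criterion.} By a Reiter-type characterization of amenability for discrete probability measure-preserving groupoids, there exists a sequence $(f_n)_n$ of non-negative Borel functions on the restricted groupoid $\mathcal{G}|_E$ whose fiberwise restrictions are probability densities and which is asymptotically invariant in $L^1$ under every bisection. Setting $\xi_n := \sqrt{f_n}$ yields unit vectors in the corresponding $L^2$-space with $\|\beta \cdot \xi_n - \xi_n\|_2 \to 0$ for every bisection $\beta$. Integrating against a symmetric Borel field $\pi$ and using Cauchy--Schwarz then gives $\langle P^\pi \xi_n, \xi_n\rangle \to 1$, so $\|P^\pi|_E\| = 1$ since $P^\pi$ is a contraction.

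\emph{Kesten's criterion implies amenability.} Enumerate a countable family $(\beta_i)_{i \in \mathbb{N}}$ of bisections that generates the Borel equivalence relation of $\mathcal{G}$, and for each $n$ let $\pi_n$ be the uniform symmetric probability field supported on $\{\beta_1^{\pm 1}, \ldots, \beta_n^{\pm 1}\}$. Fix an invariant Borel $E$ with $\mu(E) > 0$. By Kesten's criterion, $\|P^{\pi_n}|_E\| = 1$, so there exist unit vectors $\xi_n \in L^2(\mathcal{G}|_E)$ with $\langle P^{\pi_n}\xi_n, \xi_n\rangle > 1 - 1/n$. The symmetry of $\pi_n$ gives the identity
\begin{equation*}
1 - \langle P^{\pi_n}\xi_n, \xi_n\rangle = \frac{1}{2n}\sum_{i=1}^{n} \|\beta_i\cdot \xi_n - \xi_n\|_2^2,
\end{equation*}
forcing $\|\beta_i\cdot \xi_n - \xi_n\|_2 \to 0$ for each fixed $i$. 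Setting $f_n := \xi_n^2$ and invoking the Powers--Stormer inequality converts $L^2$-invariance into $L^1$-invariance, yielding a Reiter sequence for the restricted groupoid and hence its amenability.

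\emph{Expected main obstacle.} The delicate point is that the unit vectors $\xi_n$ extracted from the norm equalities may a priori concentrate on increasingly thin invariant Borel regions, so that the resulting Reiter data only witnesses amenability on such a subset rather than on all of $(\mathcal{G}, \mu)$. This is precisely why Kesten's criterion must be required on \emph{every} invariant Borel subset of positive measure: one runs the extraction separately on each candidate concentration set and assembles the local Reiter sequences into a global Borel field by a measurable exhaustion argument. Carrying out this assembly while preserving the fiberwise norm estimates, and in particular taking care of the non-uniqueness of approximate eigenvectors in the non-ergodic or spectrally degenerate fibers, is where the framework of \cite{Ha24} (and its adaptation in \cite{AFH24}) is expected to provide the key technical input.
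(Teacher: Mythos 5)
Your overall strategy for the converse direction (symmetric fields supported on bisections, norm-one Markov operators giving almost invariant vectors) is the same as the paper's, but there are two concrete slips and one genuine gap. First, a quantitative slip: with $\pi_n$ uniform on $\{\beta_1^{\pm1},\dots,\beta_n^{\pm1}\}$ your identity reads $1-\langle P^{\pi_n}\xi_n,\xi_n\rangle=\tfrac{1}{2n}\sum_{i=1}^{n}\|\beta_i\cdot\xi_n-\xi_n\|_2^2$, so the bound $\langle P^{\pi_n}\xi_n,\xi_n\rangle>1-1/n$ only gives $\|\beta_i\cdot\xi_n-\xi_n\|_2^2<2$, which does \emph{not} tend to $0$; you need to extract vectors with defect $o(1/n)$ (possible since the norm is exactly $1$, but it must be said). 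Second, since $P^{\pi_n}$ is self-adjoint of norm $1$, the supremum of $|\langle P^{\pi_n}\xi,\xi\rangle|$ could be attained near $-1$; you must replace $\xi_n$ by $|\xi_n|$ (using positivity of the kernel) to get the sign right before invoking your identity.

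The genuine gap is the step you label the ``expected main obstacle.'' Producing a Reiter/almost-invariant sequence in $L^1(\mathcal{G},\mu_t)$ only yields a $[\mathcal{G}]$-invariant mean $m$ on $L^{\infty}(\mathcal{G},\mu_t)$; amenability in the sense used here additionally requires that $m$ restrict to integration against $\mu$ on $L^{\infty}(\mathcal{G}^{(0)},\mu)$ (Lemma \ref{reduction}), and the extracted vectors may concentrate mass on a thin invariant subset, in which case $m(\chi_E)\neq\mu(E)$. Your proposed fix --- run the extraction on each concentration set and ``assemble the local Reiter sequences into a global Borel field by a measurable exhaustion argument'' --- is not what works, and it is not clear it can work: one cannot glue Reiter sequences over the uncountable family of invariant subsets, and a fiberwise normalization $\sum_{g\in\mathcal{G}^x}f_n(g)=1$ (which is what Proposition \ref{amAD} actually demands) is not produced by Powers--St\o rmer from a global $L^2$ almost-invariant vector. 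The paper instead works at the level of means: for each finite tuple of invariant sets it partitions $\mathcal{G}^{(0)}$ into the invariant atoms $A_j$ of the generated $\sigma$-algebra, applies the Kesten hypothesis on each $\mathcal{G}|_{A_j}$ to get a $\Gamma$-invariant mean $m_j$ via Hayes' Theorems 2.6--2.7, forms $m=\sum_j\mu(A_j)\,m_j$ so that $m(\chi_{E_i})=\mu(E_i)$ by construction (Lemma \ref{PIF}), and then takes a weak$^*$-limit over all countable subgroups and finite tuples by the finite intersection property, finishing with Hayes' Theorem 2.8 and Lemma \ref{reduction}. This compactness-over-means argument is the heart of the proof and is missing from your outline. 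Your forward direction is essentially the paper's argument via the weak Godement condition and is fine modulo a routine dominated-convergence justification for integrating the almost-invariance against $\pi\,d\mu_t$.
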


Our theorem fits into the broader program of finding connections between probability theory and the study of measured equivalence relations and measured groupoids. In that same program, one finds the already mentioned works of Kaimanovich \cite{Kai01,Ka05}, Hayes \cite{Ha24}, and Abert, Fraczyk and Hayes \cite{AFH24}. These works make explicit use of equivalence relations. Note, however, that the study of random unimodular random graphs, trees, or surfaces requires the study of (often covertly defined) particular measured equivalence relations (see \cite{AlLy07,AbBi22,AFH25}). These objects arise naturally in the study of percolation theory \cite{BeLySc15}.

We also deduce the following interpretation of the norm of $P^{\pi}$ associated with a symmetric Borel field of probability measures $\pi$, and hence of the amenability of the ambient groupoid. In particular, we provide a formula involving averages of return probabilities (see Remark \ref{formula}), which recovers part of Kesten's original inspiration.

\begin{introtheorem} \label{MainTheorem2}
Let $(\mathcal{G},\mu)$ be a discrete probability measure-preserving groupoid, and let $\pi$ be a symmetric Borel field of probability measures on $\mathcal{G}$. Then the \emph{$E$-spectral radius} of $\pi$,
\begin{equation*}
\rho_{E}(\mathcal{G},\mu) := \lim_{n \to \infty} \left( \mu(E)^{-1} \left\langle (P^{\pi})^{2n} \chi_{E}, \chi_{E} \right\rangle \right)^{\frac{1}{2n}}\,,
\end{equation*}
exists and satisfies $\rho_{E}(\mathcal{G},\pi) \leq 1$ for every Borel subset $E \subseteq \mathcal{G}^{(0)}$ with $\mu(E)>0$. Moreover, $\rho_{\mathcal{G}^{(0)}}(\mathcal{G},\pi) = \Vert P^{\pi} \Vert$.
\end{introtheorem}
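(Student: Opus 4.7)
The plan is to derive both parts of the statement simultaneously from the spectral theorem applied to the self-adjoint operator $P^\pi$, by passing to scalar spectral measures and invoking the convergence of $L^{2n}$-norms to the $L^\infty$-norm. The key additional input for the identification $\rho_{\mathcal{G}^{(0)}}(\mathcal{G},\pi)=\|P^\pi\|$ will be that $\chi_{\mathcal{G}^{(0)}}$ implements the canonical faithful trace on the groupoid von Neumann algebra $L(\mathcal{G},\mu)$.

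First, symmetry of $\pi$ will make $P^\pi$ self-adjoint on the ambient Hilbert space, and $\|P^\pi\|\leq 1$ since $P^\pi$ is fiberwise convolution by probability measures. The Borel functional calculus then produces a projection-valued spectral measure $\mathcal{E}$ supported on $[-1,1]$. For each Borel $E\subseteq\mathcal{G}^{(0)}$ with $\mu(E)>0$, I introduce the positive finite measure $\nu_E(\,\cdot\,):=\langle \mathcal{E}(\cdot)\chi_E,\chi_E\rangle$ on $[-1,1]$, of total mass $\|\chi_E\|^2=\mu(E)$, and rewrite
\[
\mu(E)^{-1}\langle (P^\pi)^{2n}\chi_E,\chi_E\rangle=\int_{-1}^{1}\lambda^{2n}\,d\tilde\nu_E(\lambda),
\]
where $\tilde\nu_E:=\mu(E)^{-1}\nu_E$ is a probability measure. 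Taking the $(2n)$-th root and letting $n\to\infty$, the standard convergence of $L^{2n}(\tilde\nu_E)$-norms to the $L^{\infty}(\tilde\nu_E)$-norm will yield both existence of the limit and the identification $\rho_E(\mathcal{G},\pi)=\|\mathrm{id}\|_{L^\infty(\tilde\nu_E)}\leq 1$.

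For the equality at $E=\mathcal{G}^{(0)}$, my plan is to exploit that, in the standard representation of $L(\mathcal{G},\mu)$, the vector $\chi_{\mathcal{G}^{(0)}}$ is cyclic and separating and implements the canonical trace $\tau$, which is faithful because $(\mathcal{G},\mu)$ is p.m.p. Since $P^\pi$ lies in $L(\mathcal{G},\mu)$, the scalar measure $\nu_{\mathcal{G}^{(0)}}$ then coincides with $\tau\circ\mathcal{E}$, and faithfulness of $\tau$ forces $\mathrm{supp}(\nu_{\mathcal{G}^{(0)}})=\sigma(P^\pi)$: any relatively open $U\subseteq\sigma(P^\pi)$ gives a nonzero spectral projection $\mathcal{E}(U)$, whence $\tau(\mathcal{E}(U))>0$. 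Combined with $\|P^\pi\|=\sup\{|\lambda|:\lambda\in\sigma(P^\pi)\}$, valid for self-adjoint operators, this yields $\rho_{\mathcal{G}^{(0)}}(\mathcal{G},\pi)=\|\mathrm{id}\|_{L^\infty(\tilde\nu_{\mathcal{G}^{(0)}})}=\|P^\pi\|$.

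The main obstacle I anticipate is not in the spectral-calculus portion, which is essentially routine once the setup is in place, but in the structural identifications underlying the equality case: verifying that $P^\pi$ defines a bounded element of the groupoid von Neumann algebra $L(\mathcal{G},\mu)$, and that $\chi_{\mathcal{G}^{(0)}}$ plays the role of the canonical trace vector on the ambient Hilbert space from Subsection~\ref{MarkovDefinition}. This is where the specific discrete p.m.p.\ groupoid machinery and a careful choice of standard form must be invoked; granted these identifications, the argument becomes a straightforward application of the functional calculus and the faithfulness of the trace.
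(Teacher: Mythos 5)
Your proposal is correct and its first half --- self-adjointness of $P^{\pi}$ from symmetry, the scalar spectral measure $\nu_{E}$ at the (normalized) vector $\chi_{E}$, and the convergence of $L^{2n}(\tilde\nu_{E})$-norms to the $L^{\infty}(\tilde\nu_{E})$-norm --- is exactly the paper's argument for existence of the limit and the bound $\rho_{E}(\mathcal{G},\pi)\le 1$. For the equality $\rho_{\mathcal{G}^{(0)}}(\mathcal{G},\pi)=\|P^{\pi}\|$ the two arguments establish the same key fact, namely that $\chi_{\mathcal{G}^{(0)}}$ is separating for the algebra generated by $P^{\pi}$, but by different means. You invoke the groupoid von Neumann algebra and the faithfulness of the canonical trace implemented by $\chi_{\mathcal{G}^{(0)}}$, apply the Borel functional calculus, and conclude $\mathrm{supp}(\nu_{\mathcal{G}^{(0)}})=\sigma(P^{\pi})$ from $\tau(\mathcal{E}(U))>0$ for nonempty relatively open $U\subseteq\sigma(P^{\pi})$. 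The paper instead stays at the C$^{*}$-level: it uses only the continuous functional calculus, approximates $\sqrt{f(P^{\pi})}$ in norm by convolution operators $P^{\pi_{i}}$, and deduces from the explicit identity $P^{\eta}(\chi_{\mathcal{G}^{(0)}})=\overline{\eta^{*}}$ (Remark~\ref{formula}(ii)) that a nonzero norm-limit of such operators cannot annihilate $\chi_{\mathcal{G}^{(0)}}$; this is precisely the elementary, self-contained substitute for your faithful-trace argument, and it avoids introducing $L(\mathcal{G},\mu)$ altogether. Your route is sound, but be aware of two points you would still need to supply: (a) $P^{\pi}$ as defined is a \emph{right} convolution operator, so it lives in the commutant of the left groupoid von Neumann algebra rather than in $L(\mathcal{G},\mu)$ itself --- harmless, since $\chi_{\mathcal{G}^{(0)}}$ is a trace vector for both, but the bookkeeping must be done; and (b) the cyclic/separating and trace properties of $\chi_{\mathcal{G}^{(0)}}$, which you flag but do not prove, are exactly the content that the paper's Remark~\ref{formula}(ii) makes explicit, so in a complete write-up you would end up reproving essentially that remark.
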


\vspace{3mm}

\noindent \emph{Structure of the article}. The paper is organized as follows. Section~2 collects the necessary background on functional analysis, invariant means and almost invariant vectors for measure-preserving transformations, as well as on measured groupoids. In Section~3, we introduce and study invariant Markov operators on groupoids induced by symmetric Borel fields of probability measures and establish our main results: Theorem~\ref{MainTheorem} and Theorem~\ref{MainTheorem2}. Finally, we have an appendix that contains examples illustrating that non-symmetric Borel fields of probability measures may give rise to unbounded Markov operators, hence justifying the assumption of symmetry.

\vspace{3mm}

%%%%%%%%%%%%%%%%%%%%%%%%%%%%%%%%%%%%%%%%%%%%%%%%%%%
%%%%%%%%%%%%%%%%%%%%%%%%%%%%%%%%%%%%%%%%%%%%%%%%%%%

\section{Preliminaries and notation}

%%%%%%%%%%%%%%%%%%%%%%%%%%%%%%%%%%%%%%%%%%%%%%%%%%%

\vspace{3mm}

\subsection{General notation} We denote by $\mathbb{N}:=\{0,1,2 \ldots \}$ the set of non-negative integers, by $\mathbb{N}_{\geq 1}:=\{1,2, \ldots \}$ the set of positive integers, and $[n]:=\{1,\ldots,n\}$ for $n \in \mathbb{N}_{\geq 1}$. For a set $S$ we write $\#S$ for the number of
elements in $S$ and $\chi_S$ for the characteristic function on $S$. The \emph{symmetric difference} of two sets $A$ and $B$ is denoted by $A \bigtriangleup B :=(A\setminus B)\cup (B\setminus A)$.

%%%%%%%%%%%%%%%%%%%%%%%%%%%%%%%%%%%%%%%%%%%%%%%%%%%

\vspace{3mm}

\subsection{Functional-analytic preliminaries} \label{subsec:Functional-analytic-preliminaries}

For Banach spaces $\mathcal{X}$ and $\mathcal{Y}$, we denote the Banach space of all bounded linear operators $T:\mathcal{X}\to\mathcal{Y}$, equipped with the operator norm, by $\mathbb{B}(\mathcal{X},\mathcal{Y})$. In the case where $\mathcal{X}=\mathcal{Y}$, we abbreviate $\mathbb{B}(\mathcal{X}):=\mathbb{B}(\mathcal{X},\mathcal{X})$.

The \emph{dual space} $\mathcal{X}^{*}:=\mathbb{B}(\mathcal{X},\mathbb{C})$ of $\mathcal{X}$ carries a natural locally convex topology weaker than the norm topology, called the \emph{weak$^{*}$-topology}, which is generated by the subbase of sets of the form
\begin{equation*}
\left\{\psi\in\mathcal{X}^{*}\mid \, |\phi(v)-\psi(v)|<\varepsilon \text{ for every } v\in F\right\},
\end{equation*}
where $\phi \in \mathcal{X}^\ast$, $\varepsilon > 0$, and where $F\subseteq \mathcal{X}$ is finite. By the \emph{Banach--Alaoglu theorem}, the closed unit ball $\{\phi\in\mathcal{X}^{*}\mid \|\phi\|\leq 1\}$ of $\mathcal{X}^\ast$ is compact with respect to this topology.

We will also make use of the \emph{spectral theorem} for bounded self-adjoint operators on Hilbert spaces. Let $\mathcal{H}$ be a Hilbert space and $T \in \mathbb{B}(\mathcal{H})$ a self-adjoint operator. Then every pair of vectors $\xi,\eta\in\mathcal{H}$ admits a complex Borel measure $\mu_{\xi,\eta}$ on the spectrum $\sigma(T)\subseteq \mathbb{R}$ of $T$ such that
\begin{equation} \label{SpectralTheorem}
\langle T\xi,\eta\rangle
= \int_{\sigma(T)} t \, d\mu_{\xi,\eta}(t)\,.
\end{equation}
If $\|\xi\|=1$, the measure $\mu_{\xi,\xi}$ is a probability measure.

Since $T$ is assumed to be self-adjoint, the norm closure $C^{\ast}(1,T)$ of the span of all elements $T^{k}$ with $k\in\mathbb{N}$ is a unital closed subalgebra of $\mathbb{B}(\mathcal{H})$, which is invariant under taking the adjoint in $\mathbb{B}(\mathcal{H})$. The \emph{continuous functional calculus} asserts that there exists a unital isometric isomorphism $C(\sigma(T))\rightarrow C^{\ast}(1,T)$, $f\mapsto f(T)$ of algebras which maps $\text{id}_{\sigma(T)}\in C(\sigma(T))$ to $T$. Here $C(\sigma(T))$ denotes the complex-valued continuous functions on $\sigma(T)$, equipped with the supremum norm. For $f\in C(\sigma(T))$ the identity in \eqref{SpectralTheorem} then extends via
\begin{equation*}
\langle f(T)\xi,\eta\rangle=\int_{\sigma(T)} f(t)\, d\mu_{\xi,\eta}(t)
\quad \text{for all }\xi,\eta\in\mathcal{H}\,.
\end{equation*}

Further details on these constructions can be found in Conway's classical text \cite{Co90}.

\vspace{3mm}

%%%%%%%%%%%%%%%%%%%%%%%%%%%%%%%%%%%%%%%%%%%%%%%%%%%

\subsection{Invariant means and almost invariant vectors}\label{group actions}

A \emph{standard measure space} is a pair $(Y,\nu)$ where $Y$ is a standard Borel space and $\nu$ is a $\sigma$-finite Borel measure on $Y$. If $\nu$ is a probability measure, then $(Y,\nu)$ is called a \emph{standard probability space}.

Let $\Gamma$ be a group, and let $(Y,\nu)$ be a $\sigma$-finite measure space equipped with a right action $\Gamma \curvearrowright (Y,\nu)$ by measure-preserving transformations. For $p \in [1,\infty]$ and $\gamma \in \Gamma$, we define a bounded linear operator $\alpha_{p}(\gamma) \in \mathbb{B}(L^{p}(Y,\nu))$ by
\begin{equation*}
(\alpha_{p}(\gamma) f)(x) = f(x\gamma)
\qquad\text{for } f \in L^{p}(Y,\nu),\ x \in Y\,.
\end{equation*}
By abuse of notation, we will usually suppress the parameter $p$ and simply write $\alpha := \alpha_{p}$.

A functional $m\in L^{\infty}(Y,\nu)^{*}$ is called a \emph{mean} on $Y$ if $m(1)=1$ and $m(f)\ge 0$ whenever $f\ge 0$. Since the set of all means on $Y$ is a weak$^{*}$-closed subset of the unit ball of $L^{\infty}(Y,\nu)^{*}$, it is weak$^{*}$-compact. A mean $m \in L^{\infty}(Y,\nu)^{*}$ is called \emph{$\Gamma$-invariant} if $m(\alpha(\gamma)f) = m(f)$ for all $f \in L^{\infty}(Y,\nu)$, $\gamma \in \Gamma$.

In the next section, we will use several standard characterizations of the existence of almost invariant vectors and the connection with invariant means found in \cite[Theorem~2.6, Theorem~2.7, Theorem~2.8]{Ha24}.

\vspace{3mm}

%%%%%%%%%%%%%%%%%%%%%%%%%%%%%%%%%%%%%%%%%%%%%%%%%%%%%%%%%%

\subsection{Measured groupoids}\label{pmp}

For a groupoid $\mathcal{G}$ we denote its \emph{unit space} by $\mathcal{G}^{(0)}$, and we write $s$ and $t$ for the \emph{source} and \emph{target} maps, respectively. The set of \emph{composable pairs} of $\mathcal{G}$ is denoted by $\mathcal{G}^{(2)}:=\{(g,h)\in\mathcal{G}\times\mathcal{G}\mid s(g)=t(h)\}$, while $g^{-1}$ denotes the inverse of an element $g\in\mathcal{G}$.

For any subset $E \subseteq \mathcal{G}^{(0)}$ we set
\begin{equation*}
\mathcal{G}_{E} := s^{-1}(E), \quad \mathcal{G}^{E} := t^{-1}(E), \quad 
\mathcal{G}_{E}^{E} := \mathcal{G}_{E} \cap \mathcal{G}^{E}\,,
\end{equation*}
and for $x \in \mathcal{G}^{(0)}$ we abbreviate $\mathcal{G}_{x} := \mathcal{G}_{\{x\}}$, $\mathcal{G}^{x} := \mathcal{G}^{\{x\}}$, and $\mathcal{G}_{x}^{x} := \mathcal{G}_{\{x\}}^{\{x\}}$.  Furthermore, for subsets $A,B\subseteq\mathcal{G}$ we define $A B$ to be the set of products $ab$ with $a \in A$ and $b\in B$ for which $s(a)=t(b)$, which may be empty.

A \emph{discrete Borel groupoid} is a groupoid $\mathcal{G}$ endowed with the structure of a standard Borel space such that the unit space $\mathcal{G}^{(0)}$ is a Borel subset of $\mathcal{G}$, the maps $s$, $t$, multiplication, and inversion are Borel measurable functions, and all source and target fibres are at most countable (i.e.\ $s$ and $t$ are countable-to-one). Given such a groupoid and a Borel probability measure $\mu$ on $\mathcal{G}^{(0)}$, we define measures $\mu_s$ and $\mu_t$ on $\mathcal{G}$ by
\begin{equation*}
    \mu_{s}(A) := \int_{\mathcal{G}^{(0)}} \#(\mathcal{G}_{x} \cap A)\, d\mu(x)\,, \qquad \mu_{t}(A) := \int_{\mathcal{G}^{(0)}} \#(\mathcal{G}^{x} \cap A)\, d\mu(x)\,,
\end{equation*}
for every Borel set $A \subseteq \mathcal{G}$.  
We say that the pair $(\mathcal{G},\mu)$ is a \emph{discrete probability measure-preserving} (\emph{discrete p.m.p.}) groupoid if $\mu_s = \mu_t$; in this case we also say that $\mathcal{G}$ \emph{preserves} the measure $\mu$.

Let $(\mathcal{G},\mu)$ be a discrete p.m.p.\ groupoid. A Borel subset $E \subseteq \mathcal{G}^{(0)}$ is called \emph{invariant} if $\mu(t(\mathcal{G} E) \triangle E) = 0$.  
The pair $(\mathcal{G},\mu)$ is called \emph{ergodic} if every invariant Borel set $E \subseteq \mathcal{G}^{(0)}$ satisfies $\mu(E) \in \{0,1\}$.

A Borel subset $\gamma \subseteq \mathcal{G}$ is a (Borel) \emph{bisection} if for every $x \in \mathcal{G}^{(0)}$ both sets $\gamma\{x\}$ and $\{x\}\gamma$ contain at most one element. The collection of all Borel bisections forms an inverse semigroup under the product $(\gamma, \gamma') \mapsto \gamma\gamma'$.  
The \emph{full group} $[\mathcal{G}]$ of $\mathcal{G}$ consists of all Borel bisections $\gamma$ satisfying $\gamma\gamma^{-1} = \gamma^{-1}\gamma = \mathcal{G}^{(0)}$.  
For $\gamma \in [\mathcal{G}]$ and $x \in \mathcal{G}^{(0)}$ we identify the singleton sets $\gamma x$ and $x \gamma$ with the unique elements they contain. With this convention, for any Borel subset $E \subseteq \mathcal{G}^{(0)}$ with $\mu(E) > 0$ we obtain a unitary representation $\alpha_{E} : [\mathcal{G}] \longrightarrow \mathcal{U}\big(L^{2}(\mathcal{G}|_{E}, \mu_{E})\big)$ via
\begin{equation*}
(\alpha_{E}(\gamma)\xi)(g) := \xi(g\gamma) \; \text{ for } \xi \in L^{2}(\mathcal{G}|_{E},\mu_{E}), \gamma \in [\mathcal{G}], g \in \mathcal{G}|_{E}\,,
\end{equation*}
where the \emph{restriction} $\mathcal{G}|_{E}$ of $\mathcal{G}$ to $E$ is the discrete p.m.p.\ groupoid obtained by equipping $\mathcal{G}_{E}^{E}$ with the normalized measure $\mu_{E} := \mu(E)^{-1}\mu|_{E}$.   This construction  is a special case of the representation introduced in Subsection~\ref{group actions}. Indeed, one easily checks that $[\mathcal{G}]$ acts on $\mathcal{G}|_{E}$ by right multiplication in a measure-preserving way.  
When $E = \mathcal{G}^{(0)}$, we omit the index and simply write $\alpha = \alpha_{\mathcal{G}^{(0)}}$.

%%%%%%%%%%%%%%%%%%%%%%%%%%%%%%%%%%%%%%%%%%%%%%%%%%%
%%%%%%%%%%%%%%%%%%%%%%%%%%%%%%%%%%%%%%%%%%%%%%%%%%%

\vspace{3mm}

\section{Main results}

\vspace{3mm}

In this section we introduce invariant Markov operators on groupoids and establish our main results, Theorem~\ref{MainTheorem} and Theorem~\ref{MainTheorem2}. Throughout, the pair $(\mathcal{G},\mu)$ will always denote a discrete p.m.p.\ groupoid.

%\vspace{3mm}

%%%%%%%%%%%%%%%%%%%%%%%%%%%%%%%%%%%%%%%%%%%%%%%%%%%

\subsection{Invariant Markov operators on groupoids} \label{MarkovDefinition}

Let $(\mathcal{G},\mu)$ be a discrete p.m.p.\ groupoid and let $\pi:\mathcal{G}\rightarrow\mathbb{C}$ be a Borel function. We call $\pi$ \emph{symmetric} if $\pi(g^{-1})=\overline{\pi(g)}$ for $\mu_{t}$-almost every $g\in\mathcal{G}$. Moreover, $\pi$ is called a (Borel) \emph{field of probability measures} if $\pi(g)\geq 0$ for $\mu_{t}$-almost every $g\in\mathcal{G}$ and $\sum_{g\in\mathcal{G}^{x}} \pi(g)=1$ for $\mu$-almost every $x\in\mathcal{G}^{(0)}$.

Following Hahn \cite{Ha78}, the \emph{$I$-norm} of a Borel function $\pi:\mathcal{G}\to\mathbb{C}$ is defined by
\begin{equation*}
\|\pi\|_{I}
=\max\Big\{
\esssup_{x\in\mathcal{G}^{(0)}} \sum_{g\in\mathcal{G}_{x}} |\pi(g)|,\;
\esssup_{x\in\mathcal{G}^{(0)}} \sum_{g\in\mathcal{G}^{x}} |\pi(g)|
\Big\}\,.
\end{equation*}
If $\pi$ is symmetric, then $\|\pi\|_{I}= \esssup_{x\in\mathcal{G}^{(0)}} \sum_{g\in\mathcal{G}^{x}} |\pi(g)|$, so in particular every symmetric field of probability measures has $I$-norm equal to $1$.

Whenever $\pi$ has finite $I$-norm, it induces for each $p\in[1,\infty]$ a bounded operator $P^{\pi}_{p}\in\mathbb{B}(L^{p}(\mathcal{G},\mu_t))$, as established in the following lemma. As before, we usually omit the index $p$ and write $P^{\pi}=P^{\pi}_{p}$. For $p=2$, the operator $P^{\pi}$ will be referred to as the \emph{invariant Markov operator} associated with $\pi$. Note that Kaimanovich focuses his work on the case $p=\infty$ \cite{Ka05}.

\begin{lemma}\label{I-lemma}
Let $(\mathcal{G},\mu)$ be a discrete p.m.p.\ groupoid, let $p\in[1,\infty]$, and let $\pi:\mathcal{G}\rightarrow\mathbb{C}$ be a Borel function with $\|\pi\|_{I}<\infty$. Then the map $P^{\pi}:L^{p}(\mathcal{G},\mu_{t})\rightarrow L^{p}(\mathcal{G},\mu_{t})$ given by $P^{\pi}(\xi)(g):= \sum_{h\in\mathcal{G}^{s(g)}} \xi(gh)\,\pi(h)$ for every $\xi\in L^{p}(\mathcal{G},\mu_{t})$ and $\mu_{t}$-almost every $g\in\mathcal{G}$, is well-defined and defines an element of $\mathbb{B}(L^{p}(\mathcal{G},\mu_{t}))$ with operator norm at most $\|\pi\|_{I}$.
\end{lemma}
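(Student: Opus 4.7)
The plan is the familiar Schur-test strategy adapted to discrete groupoids: establish the bound at the endpoints $p=1$ and $p=\infty$, and deduce the remaining cases by Riesz--Thorin interpolation or an equivalent direct Hölder argument. Well-definedness of $P^{\pi}(\xi)$ as a Borel function $\mu_{t}$-almost everywhere will be extracted from the resulting $L^{p}$-estimate itself, after first running the entire argument with $|\xi|$ interpreted as a nonnegative extended-real-valued function; the fact that the defining series then has finite $L^{p}$-norm forces absolute convergence almost everywhere, and Borel measurability is inherited from the countable sum of Borel functions.

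At $p=\infty$ the pointwise bound
\begin{equation*}
|P^{\pi}(\xi)(g)| \leq \|\xi\|_{\infty} \sum_{h\in\mathcal{G}^{s(g)}} |\pi(h)|
\end{equation*}
reduces the claim to the estimate $\sum_{h\in\mathcal{G}^{x}}|\pi(h)|\leq \|\pi\|_{I}$ for $\mu$-a.e.\ $x$, which transfers to $\mu_{t}$-a.e.\ $g$ because $s_{*}\mu_{t}\ll\mu$, a consequence of the p.m.p.\ condition $\mu_{s}=\mu_{t}$.

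The substantive case is $p=1$. Unfolding $\|P^{\pi}\xi\|_{1}$ through the definition of $\mu_{t}$ and applying the triangle inequality gives
\begin{equation*}
\|P^{\pi}\xi\|_{1} \leq \int_{\mathcal{G}^{(0)}}\sum_{g\in\mathcal{G}^{x}}\sum_{h\in\mathcal{G}^{s(g)}} |\xi(gh)|\,|\pi(h)|\, d\mu(x),
\end{equation*}
and the key step is a change of variables on composable pairs: for each $x$, writing $k:=gh$, the assignment $(g,h)\mapsto(k,h)$ is a Borel bijection of $\{(g,h)\in\mathcal{G}^{(2)}:t(g)=x\}$ onto $\{(k,h):t(k)=x,\,s(k)=s(h)\}$, under which $\xi(gh)=\xi(k)$. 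The double sum therefore factorises as $\sum_{k\in\mathcal{G}^{x}} |\xi(k)| \sum_{h\in\mathcal{G}_{s(k)}} |\pi(h)|$, with the inner sum bounded by $\|\pi\|_{I}$ via the \emph{other} half of the definition of the $I$-norm. Reassembling produces $\|P^{\pi}\xi\|_{1}\leq\|\pi\|_{I}\,\|\xi\|_{1}$.

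For intermediate $p$ one invokes Riesz--Thorin between the two endpoint bounds, or else applies Hölder's inequality in the form $|P^{\pi}(\xi)(g)|^{p}\leq\bigl(\sum_{h}|\pi(h)|\bigr)^{p-1}\sum_{h}|\pi(h)|\,|\xi(gh)|^{p}$ and reruns the $p=1$ computation with $|\xi|^{p}$ in place of $|\xi|$. The main obstacle is the change-of-variables identity on $\mathcal{G}^{(2)}$: one must verify measurability of $(g,h)\mapsto(gh,h)$ and justify the Fubini-style interchange of the two countable sums with the integral. Once this is in hand, both halves of $\|\pi\|_{I}$ --- the target-fibre sum used at $p=\infty$ and the source-fibre sum used at $p=1$ --- fit into the argument exactly as required, and the remaining estimates are routine.
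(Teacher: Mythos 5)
Your proof is correct and follows essentially the same route as the paper's: endpoint estimates at $p=\infty$ and $p=1$ (the latter via the change of variables $(g,h)\mapsto(gh,h)$, which is exactly what brings in the source-fibre half of the $I$-norm), with intermediate exponents handled by the same H\"older splitting $|\xi(gh)|\,|\pi(h)|=\bigl(|\xi(gh)|^{p}|\pi(h)|\bigr)^{1/p}|\pi(h)|^{1/q}$ and a rerun of the $p=1$ computation on $|\xi|^{p}$. The only differences are cosmetic: you additionally offer Riesz--Thorin as an alternative and are more explicit than the paper about the a.e.\ transfer $s_{*}\mu_{t}\ll\mu$, both of which are fine.
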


\begin{proof}
Let $p\in[1,\infty]$ and let $\pi:\mathcal{G}\rightarrow\mathbb{C}$ be a Borel function with $\Vert\pi\Vert_{I}<\infty$. We distinguish three cases. 
\begin{itemize}
\item \emph{Case 1}: Consider the case $p=1$ and let $\xi\in L^{1}(\mathcal{G},\mu_{t})$. Then, $\sum_{h\in\mathcal{G}^{r(g)}} |\xi(h)|\, |\pi(g^{-1}h)|\le \|\pi\|_{I} \,\|\xi|_{\mathcal{G}^{r(g)}}\|_{1}$ for $\mu_{t}$-almost every $g\in\mathcal{G}$. Thus the sum defining $P^{\pi}(\xi)$ converges absolutely, and, arguing as in case 3, we get that $\|P^{\pi}\|\le\|\pi\|_{I}$.

\item \emph{Case 2}: For $p=\infty$ and $\xi\in L^{\infty}(\mathcal{G},\mu_{t})$ one has that
\begin{equation*}
\sum_{h\in\mathcal{G}^{s(g)}} |\xi(gh)|\,|\pi(h)|
\le \|\xi\|_{\infty} \sum_{h\in\mathcal{G}^{s(g)}} |\pi(h)|
\le \|\pi\|_{I} \|\xi\|_{\infty}
\end{equation*}
for $\mu_{t}$-almost every $g \in \mathcal{G}$. As above, $P^{\pi}$ is well defined and $\|P^{\pi}\|\le\|\pi\|_{I}$.

\item \emph{Case 3}: Now assume that $1<p<\infty$ and let $q\in(1,\infty)$ with $\frac{1}{p}+\frac{1}{q}=1$. With Hölder's inequality it follows that for $\xi\in L^{p}(\mathcal{G},\mu_{t})$, 
\begin{eqnarray}
\nonumber
\sum_{h\in\mathcal{G}^{s(g)}}|\xi(gh)||\pi(h)| &\leq& \left(\sum_{h\in\mathcal{G}^{s(g)}}|\xi(gh)|^{p}|\pi(h)|\right)^{\frac{1}{p}}\left(\sum_{h\in\mathcal{G}^{s(g)}}|\pi(h)|\right)^{\frac{1}{q}} \\
&\leq& \|\pi\|_{I}^{\frac{1}{q}}\left(\sum_{h\in\mathcal{G}^{s(g)}}|\xi(gh)|^{p}|\pi(h)|\right)^{\frac{1}{p}}\label{eq:Well-defined}
\end{eqnarray}
for $\mu_{t}$-almost every $g\in\mathcal{G}$. By $\sum_{h\in\mathcal{G}^{r(g)}}|\xi(h)|^{p}|\pi(g^{-1}h)| \leq \Vert\pi\Vert_{I} \,\Vert\xi|_{\mathcal{G}^{s(g)}}\Vert_{p}^{p}$ the sum on the right-hand side of \eqref{eq:Well-defined} converges absolutely. This in particular implies that $P^{\pi}(\xi)$ is well-defined with
\begin{eqnarray*}
\|P^{\pi}(\xi)\|_{p}^{p} &=& \int_{\mathcal{G}}\left|\sum_{h\in\mathcal{G}^{s(g)}}\xi(gh)\pi(h)\right|^{p}d\mu_{t}(g) \\
&=& \int_{\mathcal{G}^{(0)}}\left(\sum_{g\in\mathcal{G}^{x}}\left|\sum_{h\in\mathcal{G}^{s(g)}}\xi(gh)\pi(h)\right|^{p}\right)d\mu(x) \\
&\leq& \|\pi\|_{I}^{p} \, \|\xi\|_{p}^{p}.
\end{eqnarray*}
\end{itemize}
This finishes the proof.
\end{proof}

From the lemma it in particular follows that every symmetric Borel field of probability measures gives rise to a Markov operator of norm at most $1$.

Given Borel functions $\pi_{1}$ and $\pi_{2}$ on a discrete p.m.p.\ groupoid $(\mathcal{G},\mu)$ with $\|\pi_{1}\|_{I}, \|\pi_{2}\|_{I}<\infty$, their \emph{convolution product} is defined by
\begin{equation*}
(\pi_{1}\ast\pi_{2})(g)
:= \sum_{h\in\mathcal{G}^{r(g)}} \pi_{1}(h)\,\pi_{2}(h^{-1}g),
\; \text{ for } g\in\mathcal{G}\,.
\end{equation*}
This produces again a Borel function with finite $I$-norm. The convolution is associative with $P^{\pi_{1}\ast\pi_{2}} = P^{\pi_{1}} P^{\pi_{2}}$ and $\chi_{\mathcal{G}^{(0)}} \ast \pi = \pi = \pi \ast \chi_{\mathcal{G}^{(0)}}$.

Furthermore, Borel functions on groupoids admit a natural \emph{involution}: for a Borel function $\pi: \mathcal{G} \rightarrow \mathbb{C}$ the function $\pi^\ast: \mathcal{G}\rightarrow \mathbb{C}, g\mapsto \overline{\pi(g^{-1})}$ is again Borel.

\vspace{3mm}

%%%%%%%%%%%%%%%%%%%%%%%%%%%%%%%%%%%%%%%%%%%%%%%%%%%%%%%%%

\subsection{Amenable groupoids}

As mentioned earlier, Renault's notion of amenability for (discrete) measured groupoids admits several equivalent characterizations (see, e.g., \cite{Anantharaman-DelarocheRenault2000}). For our purposes, the following formulation will be the most convenient.

\begin{definition}[{\cite[Definition 3.2.8]{Anantharaman-DelarocheRenault2000}}]
Let $(\mathcal{G},\mu)$ be a discrete measured groupoid. We say that $(\mathcal{G},\mu)$ is \emph{amenable} if there exists a bounded linear map $\Phi\colon L^{\infty}(\mathcal{G},\mu_{t})\to L^{\infty}(\mathcal{G}^{(0)},\mu)$ of norm $1$ such that $\Phi(f)=f$ for all $f\in L^{\infty}(\mathcal{G}^{(0)},\mu)$, and $\Phi(\alpha(\gamma)(f))=\alpha(\gamma)(\Phi(f))$ for all $f\in L^{\infty}(\mathcal{G},\mu_{t})$ and all $\gamma\in[\mathcal{G}]$. The map $\Phi$ is called a \emph{global invariant mean}.
\end{definition}

The proof of the ``if'' direction of Theorem \ref{MainTheorem} requires the following lemma.

\begin{lemma}\label{reduction}
Let $(\mathcal{G},\mu)$ be a discrete p.m.p.\ groupoid. Then $(\mathcal{G},\mu)$ is amenable if and only if there exists a $[\mathcal{G}]$-invariant mean $m\in L^{\infty}(\mathcal{G},\mu_{t})^{\ast}$ such that $m(f)=\int_{\mathcal{G}^{(0)}} f\, d\mu$ for all $f\in L^{\infty}(\mathcal{G}^{(0)},\mu)$.
\end{lemma}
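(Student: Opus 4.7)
The plan is to construct $m$ from $\Phi$ by integrating against $\mu$, and conversely to construct $\Phi$ from $m$ via a Radon--Nikodym argument applied to set functions built from $m$.

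For the forward direction, given $\Phi$, I set $m(f) := \int_{\mathcal{G}^{(0)}} \Phi(f)\,d\mu$. A unital norm-one linear map into a commutative $C^{\ast}$-algebra is automatically positive, so $m$ is a positive unital functional, i.e.\ a mean. The restriction condition on $\Phi$ immediately yields $m(f) = \int f\,d\mu$ for $f \in L^{\infty}(\mathcal{G}^{(0)},\mu)$. Finally, $[\mathcal{G}]$-invariance of $m$ follows from the equivariance of $\Phi$ together with the fact that the induced action of $[\mathcal{G}]$ on $\mathcal{G}^{(0)}$ preserves $\mu$ (equivalently, pull-backs are $\alpha$-fixed): $m(\alpha(\gamma)f) = \int \alpha(\gamma)\Phi(f)\,d\mu = \int \Phi(f)\,d\mu = m(f)$.

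For the backward direction, I would, for each $f \in L^{\infty}(\mathcal{G},\mu_{t})$, introduce the complex set function
\[
\nu_{f}(A) := m\bigl(f \cdot (\chi_{A} \circ t)\bigr), \qquad A \subseteq \mathcal{G}^{(0)} \text{ Borel,}
\]
and show that $\nu_{f}$ is a countably additive Borel measure on $\mathcal{G}^{(0)}$, absolutely continuous with respect to $\mu$, with bounded Radon--Nikodym derivative $\Phi(f) := d\nu_{f}/d\mu \in L^{\infty}(\mathcal{G}^{(0)},\mu)$. The key input is the estimate $|\nu_{f}(A)| \le \|f\|_{\infty}\,\mu(A)$: a suitable choice of unimodular phase $\lambda$ reduces this to $m\bigl(\mathrm{Re}(\lambda f) \cdot (\chi_{A} \circ t)\bigr) \le \|f\|_{\infty} \,m(\chi_{A} \circ t) = \|f\|_{\infty}\,\mu(A)$, where the last equality is the restriction condition. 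Once $\Phi$ is obtained, linearity, the identity $\Phi|_{L^{\infty}(\mathcal{G}^{(0)})} = \mathrm{id}$ (coming from $(f\chi_{A})\circ t = (f\circ t)(\chi_{A} \circ t)$), the norm bound $\|\Phi\|=1$ (from $\|\Phi(f)\|_{\infty} \le \|f\|_{\infty}$ and $\Phi(1) = 1$), and the equivariance $\Phi \circ \alpha(\gamma) = \Phi$ (from $\nu_{\alpha(\gamma)f} = \nu_{f}$, since $\alpha(\gamma)$ is an algebra automorphism of $L^{\infty}(\mathcal{G},\mu_{t})$ fixing $\chi_{A} \circ t$) are all routine verifications.

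The main obstacle is precisely the $\sigma$-additivity of $\nu_{f}$: since $m \in L^{\infty}(\mathcal{G},\mu_{t})^{\ast}$ is only a finitely-additive mean and not a normal functional, one cannot a priori interchange $m$ with monotone limits. The restriction condition is what rescues the argument, because the resulting domination $|\nu_{f}| \le \|f\|_{\infty}\,\mu$ transfers $\sigma$-additivity (and absolute continuity) from $\mu$ to $\nu_{f}$: if $A_{n} \downarrow \emptyset$ then $\mu(A_{n}) \to 0$, forcing $\nu_{f}(A_{n}) \to 0$.
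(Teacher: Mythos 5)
Your proposal is correct and follows essentially the same route as the paper: the forward direction is the identical construction $m(f)=\int_{\mathcal{G}^{(0)}}\Phi(f)\,d\mu$, and in the backward direction your measure $\nu_f(A)=m\bigl(f\cdot(\chi_A\circ t)\bigr)$ with the domination $|\nu_f|\le\|f\|_\infty\,\mu$ is exactly the paper's functional $k\mapsto m(fk)$ with the bound $|m(fk)|\le\|f\|_\infty\|k\|_1$, specialized to indicators. The only (cosmetic) difference is that you recover $\Phi(f)$ as a Radon--Nikodym derivative, whereas the paper extends the functional by density to $L^1(\mathcal{G}^{(0)},\mu)$ and invokes the duality $L^1(\mu)^\ast\cong L^\infty(\mu)$, which sidesteps the $\sigma$-additivity check you carry out by hand.
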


\begin{proof}
For the ``only if'' direction, assume that there exists a global invariant mean $\Phi\colon L^{\infty}(\mathcal{G},\mu_{t})\to L^{\infty}(\mathcal{G}^{(0)},\mu)$ and define $m\in L^{\infty}(\mathcal{G},\mu_{t})^{\ast}$ by $m(f):=\int_{\mathcal{G}^{(0)}} \Phi(f)\, d\mu$ for $f\in L^{\infty}(\mathcal{G},\mu_{t})$. Then $m$ is a $[\mathcal{G}]$-invariant mean, and $m(f)=\int_{\mathcal{G}^{(0)}} f\, d\mu$ for all $f\in L^{\infty}(\mathcal{G}^{(0)},\mu)$.

For the ``if'' direction, let $m\in L^{\infty}(\mathcal{G},\mu_{t})^{\ast}$ be a $[\mathcal{G}]$-invariant mean satisfying $m(f)=\int_{\mathcal{G}^{(0)}} f\, d\mu$ for $f\in L^{\infty}(\mathcal{G}^{(0)},\mu)$. Fix $f\in L^{\infty}(\mathcal{G},\mu_{t})$. For every $k\in L^{\infty}(\mathcal{G}^{(0)},\mu)$ we have
\begin{equation*}
|m(k f)|\leq m(|f k|)\leq \|f\|_{\infty} m(|k|)=\|f\|_{\infty}\|k\|_{1}\,.
\end{equation*}
Since $\mu$ is a probability measure, $L^{\infty}(\mathcal{G}^{(0)},\mu)$ is dense in $L^{1}(\mathcal{G}^{(0)},\mu)$. Hence there exists a unique functional $L_{f}\in L^{1}(\mathcal{G}^{(0)},\mu)^{\ast}\cong L^{\infty}(\mathcal{G}^{(0)},\mu)$ with $\|L_{f}\|\leq \|f\|_{\infty}$ such that $L_{f}(k)=m(fk)$ for all $k\in L^{\infty}(\mathcal{G}^{(0)},\mu)$. Let $\Phi(f)\in L^{\infty}(\mathcal{G}^{(0)},\mu)$ be the function corresponding to $L_{f}$ so that $m(fk)=\int_{\mathcal{G}^{(0)}} \Phi(f) k\, d\mu$ for all $k\in L^{1}(\mathcal{G}^{(0)},\mu)$.

Positivity of $m$ shows that $\Phi$ is positive and has norm $1$. Moreover, since $m(f)=\int_{\mathcal{G}^{(0)}} f\, d\mu$ for all $f\in L^{\infty}(\mathcal{G}^{(0)},\mu)$, we have $\Phi(f)=f$ on $L^{\infty}(\mathcal{G}^{(0)},\mu)$. Finally,
\begin{equation*}
\int_{\cG\zero} \Phi(f)\, d\mu = m(f)=m(\alpha(\gamma)f)=\int_{\cG\zero} \Phi(\alpha(\gamma)f)\, d\mu
\end{equation*}
for all $f \in L^{\infty}(\mathcal{G}^{(0)},\mu)$ and all $\gamma\in[\mathcal{G}]$, proving that $\Phi$ is a global invariant mean.
\end{proof}

Lemma \ref{reduction} shows that to prove amenability of a discrete p.m.p.\ groupoid $(\mathcal{G},\mu)$ it suffices to construct a $[\mathcal{G}]$-invariant mean on $L^{\infty}(\mathcal{G},\mu_{t})$. In the context of Theorem \ref{MainTheorem} we achieve this by producing $\Gamma$-invariant means for arbitrary countable subgroups $\Gamma\leq[\mathcal{G}]$, and then applying a compactness argument based on the weak$^*$-topology. This strategy is heavily inspired by Hayes' approach in \cite{Ha24}.

The following proposition states a classical characterization of amenability which we will use in the ``only if'' direction of the proof of Theorem \ref{MainTheorem}, often referred to as the \emph{weak Godement condition}; see, for example, \cite[Definition~7.1]{Ana11}.

\begin{proposition}\label{amAD}
A discrete measured groupoid $(\mathcal{G},\mu)$ is amenable if and only if there exists a sequence $(\xi_{n})_{n\in\mathbb{N}}$ of Borel functions on $\mathcal{G}$ such that:
\begin{enumerate}
\item[(i)] $\sum_{g\in\mathcal{G}_{x}} |\xi_{n}(g)|^{2}=1$ for $\mu$-almost every $x\in\mathcal{G}^{(0)}$ and every $n\in\mathbb{N}$;
\item[(ii)] $F_{n}\to 1$ in the weak$^{\ast}$-topology on $L^{\infty}(\mathcal{G},\mu_{t})\cong L^{1}(\mathcal{G},\mu_{t})^{\ast}$, where $F_{n}\in L^{\infty}(\mathcal{G},\mu_{t})$ is given by  $F_{n}(h):=\sum_{g\in\mathcal{G}_{t(h)}} \xi_{n}(gh)\,\overline{\xi_{n}(g)}$ for $h\in\mathcal{G}$.
\end{enumerate}
\end{proposition}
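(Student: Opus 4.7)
My plan is to reduce both directions to the classical correspondence between $[\mathcal{G}]$-invariant means on $L^{\infty}(\mathcal{G},\mu_{t})$ and almost-invariant unit vectors for the unitary representation $\alpha:[\mathcal{G}]\to\mathcal{U}(L^{2}(\mathcal{G},\mu_{s}))$, as formalised in \cite[Theorems~2.6--2.8]{Ha24}, combined with Lemma~\ref{reduction}. The bridge between condition (ii) and the language of matrix coefficients is the identity
\begin{equation*}
\langle \alpha(\gamma)\xi,\xi\rangle_{L^{2}(\mathcal{G},\mu_{s})}
= \int_{\gamma} F\, d\mu_{t}
\qquad \text{for every bisection } \gamma\in[\mathcal{G}],
\end{equation*}
where $F(h):=\sum_{g\in\mathcal{G}_{t(h)}}\xi(gh)\overline{\xi(g)}$; this follows from a direct fibration computation using that $g\mapsto g\gamma$ is a $\mu_{s}$-measure-preserving bijection of $\mathcal{G}$.

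For the \emph{sufficiency} direction I would define $m_n\in L^{\infty}(\mathcal{G},\mu_{t})^{\ast}$ by $m_n(f):=\int_{\mathcal{G}} f\,|\xi_n|^{2}\, d\mu_{t}$. Condition (i) together with $\mu_{s}=\mu_{t}$ implies that each $\xi_n$ is a unit vector in $L^{2}(\mathcal{G},\mu_{s})$ and that $m_n$ is a state; after embedding $L^{\infty}(\mathcal{G}^{(0)},\mu)$ into $L^{\infty}(\mathcal{G},\mu_{t})$ via $f\mapsto f\circ s$, a direct fiber computation gives $m_n(f\circ s)=\int f\, d\mu$. Extract a weak*-cluster point $m$ by Banach--Alaoglu. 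Since $\mu_{t}(\gamma)=1$ for any bisection $\gamma\in[\mathcal{G}]$, the indicator $\chi_{\gamma}$ lies in $L^{1}(\mathcal{G},\mu_{t})$, so (ii) combined with the identity above gives $\langle \alpha(\gamma)\xi_n,\xi_n\rangle\to 1$ and hence $\|\alpha(\gamma)\xi_n-\xi_n\|_{2}\to 0$. The standard Cauchy--Schwarz estimate $|m_n(\alpha(\gamma)f)-m_n(f)|\leq \|f\|_{\infty}\|\alpha(\gamma)\xi_n-\xi_n\|_{2}(\|\alpha(\gamma)\xi_n\|_{2}+\|\xi_n\|_{2})$ then propagates this to $[\mathcal{G}]$-invariance of $m$, and Lemma~\ref{reduction} closes the direction.

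For the \emph{necessity} direction I would start from the $[\mathcal{G}]$-invariant mean furnished by Lemma~\ref{reduction} and invoke the Day--Namioka-type correspondence \cite[Theorem~2.8]{Ha24} applied to $\alpha$, producing, for every finite $K\subseteq[\mathcal{G}]$ and $\varepsilon>0$, a unit vector $\eta\in L^{2}(\mathcal{G},\mu_{s})$ with $\|\alpha(\gamma)\eta-\eta\|_{2}<\varepsilon$ for all $\gamma\in K$. Separability of $L^{1}(\mathcal{G},\mu_{t})$ makes the weak*-topology metrizable on the unit ball of $L^{\infty}$, so a diagonal extraction yields a sequence $(\eta_n)$ of unit vectors whose associated functions $\tilde{F}_n$ converge weak* to $1$; the remaining task is to upgrade $\|\eta_n\|_{2}=1$ to the fiberwise normalisation required by (i). This is the main obstacle: naively setting $\xi_n(g):=\eta_n(g)/\|\eta_n|_{\mathcal{G}_{s(g)}}\|_{2}$ may blow up on fibers of small mass and destroy the weak* convergence in (ii). The classical remedy is the \emph{square-root trick} $\||\alpha(\gamma)\eta|^{2}-|\eta|^{2}\|_{L^{1}(\mu_{s})}\leq 2\|\alpha(\gamma)\eta-\eta\|_{L^{2}(\mu_{s})}\|\eta\|_{L^{2}(\mu_{s})}$, which shows that the fiber masses $\eta^{\sharp}(x):=\|\eta|_{\mathcal{G}_x}\|_{2}^{2}$ are almost $[\mathcal{G}]$-invariant in $L^{1}(\mathcal{G}^{(0)},\mu)$; a Reiter-type truncation then replaces $\eta^{\sharp}$ by a density supported on fibers of uniformly controlled mass, after which the fiberwise rescaling becomes harmless and yields $(\xi_n)$ satisfying both (i) and (ii).
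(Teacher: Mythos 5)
The paper does not actually prove Proposition~\ref{amAD}; it cites it as the classical ``weak Godement'' characterization from the Anantharaman-Delaroche--Renault literature, so your attempt must be measured against the standard argument. Your sufficiency direction (sequence $\Rightarrow$ amenable) is sound: the identity $\langle\alpha(\gamma)\xi,\xi\rangle=\int_{\gamma}F\,d\mu_{t}$ is correct, $\chi_{\gamma}\in L^{1}(\mathcal{G},\mu_{t})$ because $\mu_{t}(\gamma)=1$, and the passage from $\|\alpha(\gamma)\xi_{n}-\xi_{n}\|_{2}\to 0$ to an invariant mean via the states $f\mapsto\int f|\xi_{n}|^{2}\,d\mu_{t}$ and Banach--Alaoglu is the standard route, compatible with Lemma~\ref{reduction}.

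The necessity direction, however, has a genuine gap, and it is exactly at the step you flag. Starting from \emph{globally} normalized almost-invariant vectors $\eta$ for $\alpha:[\mathcal{G}]\to\mathcal{U}(L^{2}(\mathcal{G},\mu_{s}))$ and then trying to impose the fiberwise normalization (i) afterwards does not work: the fiber-mass function $\eta^{\sharp}(x)=\sum_{g\in\mathcal{G}_{x}}|\eta(g)|^{2}$ being almost invariant in $L^{1}(\mathcal{G}^{(0)},\mu)$ does \emph{not} force it to be close to the constant $1$. Concretely, if $A\subseteq\mathcal{G}^{(0)}$ is an invariant set of measure $\tfrac12$, an almost-invariant $\eta$ may be supported entirely on $s^{-1}(A)$; then $\eta^{\sharp}=2\chi_{A}$ is \emph{exactly} invariant, there is nothing for a ``Reiter-type truncation'' to truncate, and any fiberwise rescaling must invent values on all fibers over $A^{c}$ with no invariance information whatsoever (extending by $\chi_{\mathcal{G}^{(0)}}$ on those fibers destroys almost invariance, since $\alpha(\gamma)\chi_{\mathcal{G}^{(0)}}=\chi_{\gamma^{-1}}$). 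A related problem infects your diagonal extraction: before the fiberwise normalization is achieved, the functions $\tilde F_{n}$ are only bounded by $\|\eta_{n}|_{\mathcal{G}_{s(\cdot)}}\|_{2}\,\|\eta_{n}|_{\mathcal{G}_{t(\cdot)}}\|_{2}$, so they need not lie in a bounded subset of $L^{\infty}(\mathcal{G},\mu_{t})$, and convergence of $\int_{\gamma}\tilde F_{n}\,d\mu_{t}$ for bisections cannot be upgraded to weak$^{\ast}$ convergence against all of $L^{1}$. The standard proof avoids both issues by reversing the order of operations: from the invariant mean one first extracts (by a Day--Namioka convexity argument applied fiberwise) a net of nonnegative Borel functions $f_{i}$ with $\sum_{g\in\mathcal{G}_{x}}f_{i}(g)=1$ for a.e.\ $x$ and $\sum_{g}|f_{i}(gh)-f_{i}(g)|\to 0$ in the appropriate weak sense (the Reiter condition for groupoids), and only then sets $\xi_{i}:=\sqrt{f_{i}}$, so that (i) holds by construction and (ii) follows from $\|\sqrt{a}-\sqrt{b}\|_{2}^{2}\le\|a-b\|_{1}$. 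You should restructure the necessity direction along these lines rather than attempting to repair the fiber masses of global $L^{2}$ vectors.
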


\vspace{3mm}

%%%%%%%%%%%%%%%%%%%%%%%%%%%%%%%%%%%%%%%%%%%%%%%%%%%%%%%%%

\subsection{Proof of Kesten's criterion for amenability}

We now work with Markov operators on restrictions of groupoids. For a discrete p.m.p.\ groupoid $(\mathcal{G},\mu)$, an invariant Borel subset $E\subseteq\mathcal{G}^{(0)}$ with $\mu(E)>0$, and a Borel function $\pi\colon\mathcal{G}\to\mathbb{C}$ with $\|\pi\|_{I}<\infty$, we denote by $P_{E}^{\pi}\in\mathbb{B}(L^{2}(\mathcal{G}|_{E},(\mu_{E})_{t}))$ the Markov operator associated with $(\mathcal{G}|_{E},\mu_{E})$ and the restriction $\pi|_{\mathcal{G}|_{E}}$.

Definition \ref{maindefinition} can then be formulated as follows.

\begin{definition}\label{KestenDefinition}
Let $(\mathcal{G},\mu)$ be a discrete p.m.p.\ groupoid. We say that $(\mathcal{G},\mu)$ satisfies \emph{Kesten's criterion} if, for every invariant Borel subset $E\subseteq\mathcal{G}^{(0)}$ with $\mu(E)>0$ and every symmetric Borel field of probability measures $\pi\colon\mathcal{G}\to\mathbb{C}$, one has $\|P_{E}^{\pi}\|=1$.
\end{definition}

In the next lemma, a countable subgroup $\Gamma\leq[\mathcal{G}]$ is said to \emph{cover} $\mathcal{G}$ if the union $\bigcup_{\gamma\in\Gamma}\gamma$ is a co-null set in $\cG$. 

\begin{lemma}\label{PIF}
Let $(\mathcal{G},\mu)$ be a discrete p.m.p.\ groupoid such that $\|P_{E}^{\pi}\|=1$ for every invariant Borel subset $E\subseteq\mathcal{G}^{(0)}$ with $\mu(E)>0$ and every symmetric field $\pi\colon\mathcal{G}\to\mathbb{C}$ of probability measures. Then for any invariant Borel subsets $E_{1},\ldots,E_{k}\subseteq\mathcal{G}^{(0)}$ of positive measure and any countable subgroup $\Gamma\leq[\mathcal{G}]$ covering $\mathcal{G}$, there exists a $\Gamma$-invariant mean $m\in L^{\infty}(\mathcal{G},\mu)^{\ast}$ satisfying $m(\chi_{E_{i}})=\mu(E_{i})$ for $1\leq i\leq k$.
\end{lemma}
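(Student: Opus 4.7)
The plan is a Day-type construction: build a sequence of states $\phi_n \in L^\infty(\mathcal{G},\mu_t)^\ast$ of the form $\phi_n(f) = \int f|\xi_n|^2\,d\mu_t$ that are asymptotically $\Gamma$-invariant and satisfy $\phi_n(\chi_{E_i}) = \mu(E_i)$ on the nose, and then extract a weak-$\ast$ cluster point via Banach--Alaoglu. To pin down the values on the $\chi_{E_i}$, I first refine the family $\{E_1,\ldots,E_k\}$ into the partition $\{F_1,\ldots,F_N\}$ of $\mathcal{G}^{(0)}$ by the positive-measure atoms of the Boolean algebra it generates; each $F_j$ is invariant, and each $E_i$ is, modulo $\mu$-null sets, a union of some $F_j$'s.

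Next, choose a symmetric probability measure $\nu$ on $\Gamma$ with $\nu(e) > 0$ and $\mathrm{supp}(\nu) = \Gamma$, and set
\[ \pi := \sum_{\gamma \in \Gamma} \nu(\gamma)\chi_\gamma \colon \mathcal{G} \to \mathbb{R}. \]
Since every $\gamma \in [\mathcal{G}]$ is a bisection with $\#(\mathcal{G}^x \cap \gamma) = 1$ for $\mu$-a.e.\ $x$, $\pi$ is a symmetric Borel field of probability measures. Expanding the definition of the Markov operator yields
\[ P^\pi_{F_j} = \sum_{\gamma \in \Gamma} \nu(\gamma)\alpha_{F_j}(\gamma), \]
a self-adjoint contraction whose spectrum is contained in $[2\nu(e)-1, 1]$ thanks to $\nu(e)>0$. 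Hence the hypothesis $\|P^\pi_{F_j}\| = 1$ forces $1 \in \sigma(P^\pi_{F_j})$, yielding unit vectors $\eta_n^{(j)} \in L^2(\mathcal{G}|_{F_j}, (\mu_{F_j})_t)$ with $\langle P^\pi_{F_j}\eta_n^{(j)}, \eta_n^{(j)}\rangle \to 1$. Since each summand of the convex combination $\sum_\gamma \nu(\gamma)\mathrm{Re}\langle\alpha_{F_j}(\gamma)\eta_n^{(j)}, \eta_n^{(j)}\rangle$ is bounded above by $\nu(\gamma)$ and $\mathrm{supp}(\nu) = \Gamma$, each term must individually converge to $\nu(\gamma)$, whence
\[ \|\alpha_{F_j}(\gamma)\eta_n^{(j)} - \eta_n^{(j)}\|_{L^2((\mu_{F_j})_t)} \xrightarrow{n\to\infty} 0 \quad \text{for every } \gamma \in \Gamma. \]

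Viewing each $\eta_n^{(j)}$ inside $L^2(\mathcal{G},\mu_t)$ via extension by zero, the identity $(\mu_{F_j})_t = \mu(F_j)^{-1}\mu_t|_{\mathcal{G}|_{F_j}}$ gives $\|\eta_n^{(j)}\|_{L^2(\mu_t)}^2 = \mu(F_j)$. Therefore $\xi_n := \sum_j \eta_n^{(j)}$ has disjoint supports, $\|\xi_n\|_{L^2(\mu_t)}=1$, and remains $\Gamma$-almost invariant. Defining $\phi_n(f) := \int f|\xi_n|^2\, d\mu_t$, measure-preservation of $\alpha(\gamma)$ gives $|\phi_n(\alpha(\gamma)f) - \phi_n(f)| \leq 2\|f\|_\infty \|\alpha(\gamma^{-1})\xi_n - \xi_n\|_2 \to 0$, so any weak-$\ast$ cluster point $m$ of $(\phi_n)$ is a $\Gamma$-invariant mean. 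Identifying $\chi_{E_i}$ with $\chi_{\mathcal{G}^{E_i}} \in L^\infty(\mathcal{G},\mu_t)$, the partition construction forces $\phi_n(\chi_{E_i}) = \sum_{j : F_j \subseteq E_i}\mu(F_j) = \mu(E_i)$ for every $n$, hence $m(\chi_{E_i}) = \mu(E_i)$, as required.

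The main technical step is the spectral-to-ergodic translation: extracting almost-invariance for each individual $\gamma \in \Gamma$ from the single scalar equality $\|P^\pi_{F_j}\| = 1$. This hinges on the explicit identification $P^\pi = \sum_\gamma \nu(\gamma)\alpha(\gamma)$ and on the careful choice of $\nu$ with $\nu(e) > 0$ (to push $\sigma(P^\pi_{F_j})$ inside $(-1,1]$ and thus force $1 \in \sigma(P^\pi_{F_j})$) and full support (so that convexity promotes spectral closeness to norm-closeness under every $\gamma$). The gluing across the invariant atoms $F_j$, in turn, is what promotes a generic $\Gamma$-invariant mean into one that reproduces $\mu(E_i)$ exactly on each prescribed invariant set, rather than producing only an inequality.
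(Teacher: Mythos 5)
Your proof is correct, and its skeleton coincides with the paper's: both arguments refine $E_{1},\dots,E_{k}$ into a finite partition of $\mathcal{G}^{(0)}$ into positive-measure invariant atoms, choose a symmetric $\nu\in\mathrm{Prob}(\Gamma)$, form $\pi(g)=\nu(\{\gamma\in\Gamma\mid g\in\gamma\})$, and identify $P^{\pi}_{F_{j}}$ with the averaged unitary $\sum_{\gamma}\nu(\gamma)\alpha_{F_{j}}(\gamma)$. The difference is in how the hypothesis $\|P^{\pi}_{F_{j}}\|=1$ is converted into an invariant mean: the paper outsources this to \cite[Theorems~2.6--2.8]{Ha24}, obtaining a $\Gamma$-invariant mean $m_{j}$ on each $L^{\infty}(\mathcal{G}|_{F_{j}},(\mu_{F_{j}})_{t})$ and gluing the means via $m=\sum_{j}\mu(F_{j})m_{j}$, whereas you prove the implication from scratch: the requirement $\nu(e)>0$ pushes $\sigma(P^{\pi}_{F_{j}})$ into $[2\nu(e)-1,1]$ so that norm one forces $1\in\sigma(P^{\pi}_{F_{j}})$, the convexity argument with $\mathrm{supp}(\nu)=\Gamma$ upgrades $\langle P^{\pi}_{F_{j}}\eta_{n}^{(j)},\eta_{n}^{(j)}\rangle\to 1$ to almost-invariance under every $\gamma\in\Gamma$, and you then glue the vectors (rather than the means) across the invariant atoms before passing to a weak$^{*}$ cluster point of the vector states. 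What your route buys is self-containedness and an explicit handling of the sign issue ($-1$ possibly realizing the norm), at the cost of fixing $\nu$ with full support and an atom at the identity rather than merely generating support; what the paper's route buys is brevity. Both versions rely on the same implicit identification of $\chi_{E_{i}}$ with $\chi_{\mathcal{G}^{E_{i}}}$ inside $L^{\infty}(\mathcal{G},\mu_{t})$, which you at least make explicit, and like the paper you never actually use the covering hypothesis on $\Gamma$, which is harmless since it is only needed for the later intersection argument.
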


\begin{proof}
Let $\mathcal{F}$ denote the $\sigma$-algebra generated by $E_{1},\ldots,E_{k}$, and let $\nu\in\mathrm{Prob}(\Gamma)$ be a symmetric probability measure whose support generates $\Gamma$. There exists a finite partition $(A_{j})_{1\leq j\leq l}$ of $\mathcal{G}^{(0)}$ into invariant Borel subsets of positive measure such that $\mathcal{F}=\bigl\{\bigcup_{j\in D} A_{j} \mid D\subseteq[l]\bigr\}$.

Define $\pi\colon\mathcal{G}\to[0,\infty)$ by $\pi(g):=\nu(\{\gamma\in\Gamma \mid g\in\gamma\})$. A simple computation shows that $\pi$ is a symmetric Borel field of probability measures. Similarly,
\begin{align*}
(P_{A_{j}}^{\pi}\xi)(g)
&=\sum_{h\in\mathcal{G}^{s(g)}} \xi(gh)\, \nu(\{\gamma \in \Gamma \mid  h\in\gamma\}) \\
&=\sum_{\gamma\in\Gamma} \xi(g\gamma) \, \nu(\gamma) \\
&=\left( \alpha_{A_{j}}(\nu)(\xi)\right) (g)\,.
\end{align*}
for all $1\leq j\leq l$, $\xi\in L^{2}(\mathcal{G}|_{A_{j}},(\mu_{A_{j}})_{t})$, $g\in\mathcal{G}|_{A_{j}}$ where $\alpha_{A_j}(\nu):=\sum_{\gamma \in \Gamma} \nu(\gamma)\alpha_{A_j}(\gamma)$, so that $P_{A_{j}}^{\pi}=\alpha_{A_{j}}(\nu)$. By assumption, $\|\alpha_{A_{j}}(\nu)\|=\|P_{A_{j}}^{\pi}\|=1$, so \cite[Theorem~2.6]{Ha24} and \cite[Theorem~2.7]{Ha24} yield a $\Gamma$-invariant mean $m_{j}\in L^{\infty}(\mathcal{G}|_{A_{j}},(\mu_{A_{j}})_{t})^{\ast}$.

Define $m\in L^{\infty}(\mathcal{G},\mu_{t})^{\ast}$ by $m(f):=\sum_{j=1}^{l} \mu(A_{j}) \, m_{j}(f|_{\mathcal{G}|_{A_{j}}})$ for $f \in L^{\infty}(\mathcal{G},\mu_{t})$. Since the $A_{j}$ are invariant and disjoint, $m$ is $\Gamma$-invariant and satisfies $m(\chi_{A_{j}})=\mu(A_{j})$  for all $1\leq j\leq l$, hence $m(\chi_{E_{i}})=\mu(E_{i})$ for all $i$. The result then follows from Theorem \cite[Theorem~2.8]{Ha24}.
\end{proof}

\begin{proof}[Proof of Theorem \ref{MainTheorem}]
For the ``if'' direction assume that the discrete p.m.p. groupoid $(\mathcal{G},\mu)$ satisfies Kesten's criterion. For a countable subgroup $\Gamma\leq[\mathcal{G}]$ and a finite tuple $\mathbf{E}=(E_{1},\ldots,E_{k})$ of invariant Borel subsets of positive measure, let $\mathcal{M}_{\Gamma,\mathbf{E}}$ denote the set of $\Gamma$-invariant means $m\in L^{\infty}(\mathcal{G},\mu_{t})^{\ast}$ with $m(\chi_{E_{i}})=\mu(E_{i})$ for $1\leq i \leq k$. By Lemma \ref{PIF}, each $\mathcal{M}_{\Gamma,\mathbf{E}}$ is non-empty and weak$^{\ast}$-compact.

By the finite intersection property the intersection $\bigcap_{\Gamma,\mathbf{E}} \mathcal{M}_{\Gamma,\mathbf{E}}$ is non-empty. Hence there exists a $[\mathcal{G}]$-invariant mean $m \in L^{\infty}(\mathcal{G},\mu_{t})^{*}$ satisfying $m(\chi_{E})=\mu(E)$ for every invariant Borel subset $E\subseteq\mathcal{G}^{(0)}$. By \cite[Theorem~2.8]{Ha24}, $m(f)=\int_{\mathcal{G}^{(0)}} f\, d\mu$ for all $f\in L^{\infty}(\mathcal{G}^{(0)},\mu)$, and Lemma \ref{reduction} then implies that $(\mathcal{G},\mu)$ is amenable.

For the ``only if'' direction assume that $(\mathcal{G},\mu)$ is amenable. Let $\pi\colon\mathcal{G}\to [0,\infty)$ be a symmetric Borel field of probability measures and let $E\subseteq\mathcal{G}^{(0)}$ be an invariant Borel subset with $\mu(E)>0$. Lemma \ref{I-lemma} shows that $\|P_{E}^{\pi}\|\leq1$.

To show the reverse inequality, let $(\xi_{n})_{n \in \mathbb{N}}$ be sequence of Borel functions as in Proposition \ref{amAD}, and for each $n$ let $\xi_{n}^{E}\in L^{2}(\mathcal{G}|_{E},(\mu_{E})_{t})$ be the restriction of $\xi_{n}$ to $\mathcal{G}|_{E}$. Then,
\begin{align*}
\langle P_{E}^{\pi}\xi_{n}^{E},\xi_{n}^{E}\rangle & =\frac{1}{\mu(E)}\int_{E}\sum_{g\in\mathcal{G}_{x}}(P_{E}^{\pi}\xi_{n}^{E})(g)\overline{\xi_{n}^{E}(g)}d\mu(x)\\
 & =\frac{1}{\mu(E)}\int_{E}\sum_{g\in\mathcal{G}_{x}}\sum_{h\in\mathcal{G}^{x}}\xi_{n}(gh)\overline{\xi_{n}(g)}\pi(h)d\mu(x)\\
 & =\frac{1}{\mu(E)}\int_{E}\sum_{h\in\mathcal{G}^{x}}\pi(h)\sum_{g\in\mathcal{G}_{x}}\xi_{n}(gh)\overline{\xi_{n}(g)}d\mu(x) \\
 & \rightarrow\frac{1}{\mu(E)}\int_{E}\sum_{h\in\mathcal{G}^{x}}\pi(h)d\mu(x) \\
  & =1\,.
\end{align*}
Hence $\|P_{E}^{\pi}\|=1$, completing the proof.
\end{proof}

\begin{remark}
For groups, it suffices that one Markov operator associated with a symmetric non-degenerate measure has norm $1$ in order to conclude amenability. As shown by the examples in \cite{Kai01}, this is no longer true at the level of general groupoids. However, for a measurable bundle of groups such a reduction still works. Indeed, if a Markov operator $P$ has norm $1$, there exists a sequence $(\xi_{n})_{n \in \mathbb{N}}\subseteq L^{2}(\mathcal{G},\mu_{t})$ of positive unit vectors such that $\langle P\xi_{n},\xi_{n}\rangle\to 1$. A computation analogous to that in the proof of the ``only if'' direction of Theorem \ref{MainTheorem} shows that the $\xi_{n}$ are asymptotically invariant. Moreover, the $\xi_{n}$ may be chosen so that the sum over the target fibres equals $1$. Since in a bundle of groups the source and target fibres coincide, Proposition \ref{amAD} then implies that the bundle is amenable.
\end{remark}

\vspace{3mm}

%%%%%%%%%%%%%%%%%%%%%%%%%%%%%%%%%%%%%%%%%%%%%%%%%%%%%%%%%

\subsection{A formula for the spectral radius}

\emergencystretch3em
In \cite{Kesten1959a, Kesten1959b}, Kesten characterized the amenability of finitely generated groups in terms of the return probabilities of random walks on their Cayley graphs. Motivated by this perspective, we introduce the following notion.

\begin{definition}
Let $(\mathcal{G},\mu)$ be a discrete p.m.p.\ groupoid. Given a Borel subset $E\subseteq\mathcal{G}^{(0)}$ with $\mu(E)>0$ and a symmetric Borel field of probability measures $\pi:\mathcal{G}\rightarrow [0,\infty)$, we define
\begin{equation}
\rho_{E}(\mathcal{G},\pi)
:=\lim_{n\to\infty}\mu(E)^{-\frac{1}{2n}}
\left\langle (P^{\pi})^{2n}\chi_{E},\chi_{E}\right\rangle^{\frac{1}{2n}}
\label{eq:SpectralRadius}
\end{equation}
and call it the \emph{$E$-spectral radius of $\pi$}. When $E=\mathcal{G}^{(0)}$, we write simply $\rho(\mathcal{G},\pi):=\rho_{\mathcal{G}^{(0)}}(\mathcal{G},\pi)$ and refer to this as the \emph{spectral radius of $\pi$}.
\end{definition}

Recall that Theorem~\ref{MainTheorem2} asserts that the limit in \eqref{eq:SpectralRadius} always exists, satisfies $\rho_{E}(\mathcal{G},\pi)\le 1$, and that $\rho(\mathcal{G},\pi)=\|P^{\pi}\|$. Before proving this, we record two observations.

\begin{remark}\label{formula}
\emph{(i)} Let $(\mathcal{G},\mu)$ be a discrete p.m.p.\ groupoid, $E\subseteq\mathcal{G}^{(0)}$ a Borel subset with $\mu(E)>0$, and $\pi:\mathcal{G}\to [0,\infty)$ a symmetric Borel field of probability measures. For $n\in\mathbb{N}_{\ge 1}$ denote by $\pi^{*n}$ the $n$-fold convolution power of $\pi$. Then
\begin{equation*}
\bigl((P^{\pi})^{n}\chi_{E}\bigr)(g)
=
\begin{cases}
\pi^{*n}(g^{-1})\,, & t(g)\in E\,,\\[2mm]
0\,, & \text{otherwise},
\end{cases}
\end{equation*}
for every $g\in\mathcal{G}$. Consequently,
\begin{equation*}
\langle (P^{\pi})^{n}\chi_{E},\chi_{E}\rangle
=\int_{E}\pi^{*n}(x)\,d\mu(x)
\end{equation*}
and hence
\begin{equation*}
\rho_{E}(\mathcal{G},\pi)
=\lim_{n\to\infty}\mu(E)^{-\frac{1}{2n}}
\left(\int_{E}\pi^{*(2n)}(x)\,d\mu(x)\right)^{\frac{1}{2n}}\,.
\end{equation*}
Following the framework of Kaimanovich in \cite{Ka05}, the quantity $\mu(E)^{-1}\int_{E}\pi^{*(2n)}(x)\,d\mu(x)$
may be interpreted as the normalized $\mu$-average of the probability of returning to the set $E$ after $2n$ steps.

\emph{(ii)} For a discrete p.m.p.\ groupoid  $(\mathcal{G},\mu)$ the same computation used in the previous remark shows that $P^{\pi}(\chi_{\mathcal{G}^{(0)}})=\overline{\pi^{*}}$ for every Borel function $\pi:\mathcal{G}\to\mathbb{C}$ with $\|\pi\|_{I}<\infty$. Hence, appealing to Lemma~\ref{I-lemma},
\begin{equation*}
\|\pi\|_{2}
=\|P^{\pi}(\chi_{\mathcal{G}^{(0)}})\|_{2}
\le \|P^{\pi}\|
\le \|\pi\|_{I}\,.
\end{equation*}
Therefore, if an operator $T\in\mathbb{B}(L^{2}(\mathcal{G},\mu_{t}))$ can be written as a norm limit $ T=\lim_{i\to\infty}P^{\pi_{i}}$ for suitable Borel functions $\pi_{i}:\mathcal{G}\to\mathbb{C}$ with finite $I$-norms, then $T$ itself has to be of the form $T=P^\eta$, for some Borel function $\eta\in L^2(\cG,\mu_t)$. In this case, $\eta$ can be retrieved as $\eta=\overline{T(\chi_{\mathcal{G}^{(0)}})^\ast}$ and it follows that $ T(\chi_{\mathcal{G}^{(0)}})$ is non-zero whenever $T\neq 0$.
\end{remark}

We now proceed to the proof of Theorem~\ref{MainTheorem2}.

\begin{proof}[Proof of Theorem~\ref{MainTheorem2}]
Since $\pi$ is symmetric, $P^{\pi}$ is self-adjoint. Indeed, for $\xi,\eta\in L^{2}(\mathcal{G},\mu_{t})$,
\begin{eqnarray*}
\langle P^{\pi}\xi,\eta\rangle &=& \int_{\mathcal{G}^{(0)}}\sum_{g\in\mathcal{G}_{x}}(P^{\pi}\xi)(g)\overline{\eta(g)}d\mu(x) =\int_{\mathcal{G}^{(0)}}\sum_{g\in\mathcal{G}_{x}}\sum_{h\in\mathcal{G}^{x}}\xi(gh)\overline{\eta(g)}\pi(h)d\mu(x)\\
 &=& \int_{\mathcal{G}^{(0)}}\sum_{h\in\mathcal{G}^{x}}\pi(h)\sum_{g\in\mathcal{G}_{x}}\xi(gh)\overline{\eta(g)}d\mu(x) =\int_{\mathcal{G}^{(0)}}\sum_{h\in\mathcal{G}_{x}}\pi(h^{-1})\sum_{g\in\mathcal{G}_{x}}\xi(gh^{-1})\overline{\eta(g)}d\mu(x)\\
 &=& \int_{\mathcal{G}^{(0)}}\sum_{h\in\mathcal{G}_{x}}\pi(h)\sum_{g\in\mathcal{G}_{t(h)}}\xi(g)\overline{\eta(gh)}d\mu(x) =\int_{\mathcal{G}^{(0)}}\sum_{g\in\mathcal{G}_{x}}\xi(g)\sum_{h\in\mathcal{G}^{x}}\overline{\eta(gh)}\pi(h)d\mu(x)\\
 & =& \int_{\mathcal{G}^{(0)}}\sum_{g\in\mathcal{G}_{x}}\xi(g)\overline{(P^{\pi}\eta)(g)}d\mu(x)=\langle\xi,P^{\pi}\eta\rangle\,.
\end{eqnarray*}

Fix a Borel subset $E\subseteq\mathcal{G}^{(0)}$ with $\mu(E)>0$ and define the unit vector $\xi_{E}:=\mu(E)^{-1/2}\chi_{E}\in L^{2}(\mathcal{G},\mu_{t})$. By the spectral theorem for self-adjoint operators (together with the discussion in Subsection~\ref{subsec:Functional-analytic-preliminaries}), there exists a probability measure $\nu_{E}$ on the spectrum $\sigma(P^{\pi})\subseteq[-1,1]$ such that
\begin{equation}
\langle f(P^{\pi})\xi_{E},\xi_{E}\rangle
=\int_{\sigma(P^{\pi})} f(t)\,d\nu_{E}(t)
\label{eq:SpectralTheorem}
\end{equation}
for every continuous function $f\in C(\sigma(P^{\pi}))$. In particular, the limit in \eqref{eq:SpectralRadius} exists and equals the $L^{\infty}$-norm of the function $t\mapsto |t|$ on $\sigma(P^{\pi})$.

Now assume that $E=\mathcal{G}^{(0)}$. Since $P^{\pi}$ is self-adjoint, there exists $\lambda\in\sigma(P^{\pi})$ with $\|P^{\pi}\|=|\lambda|$. We claim that $\lambda\in\operatorname{supp}(\nu_{E})$. Indeed, if we suppose otherwise, then some open neighborhood $U$ of $\lambda$ satisfies $\nu_{E}(U)=0$. Choose a non-negative $f\in C(\sigma(P^{\pi}))$ with $f(\lambda)=1$ and $f\equiv 0$ on $U^{c}\cap\sigma(P^{\pi})$. By \eqref{eq:SpectralTheorem},
\begin{equation*}
\langle f(P^{\pi})\xi_{E},\xi_{E}\rangle
=\int_{U\cap\sigma(P^{\pi})} f(t)\,d\nu_{E}(t)=0\,.
\end{equation*}
However, $\sqrt{f(P^{\pi})}$ is a non-zero positive operator in the smallest norm closed, self-adjoint subalgebra of $\mathbb{B}(L^{2}(\mathcal{G},\mu_{t}))$ containing $P^{\pi}$, and hence can be approximated in norm by operators of the form $P^{\pi_{i}}$, where the $\pi_{i}$ are Borel functions of finite $I$-norm. By Remark~\ref{formula} (ii),
\begin{equation*}
\langle f(P^{\pi})\xi_{E},\xi_{E}\rangle
=\|\sqrt{f(P^{\pi})}\xi_{E}\|_{2}^{2}\neq 0\,,
\end{equation*}
a contradiction. Thus $\lambda\in\operatorname{supp}(\nu_{E})$.

Combining this with the previous paragraph yields $\rho(\mathcal{G},\pi)=|\lambda|=\|P^{\pi}\|$, as desired.
\end{proof}

%%%%%%%%%%%%%%%%%%%%%%%%%%%%%%%%%%%%%%%%%%%%%%%%%%%

\vspace{3mm}

\section{Appendix: unbounded Markov operators}

\vspace{3mm}

The purpose of this appendix is to illustrate how Borel fields of probability measures with infinite $I$-norm can give rise to unbounded Markov operators. We have two examples. The first one occurs in a discrete probability measure-preserving non-ergodic groupoid, whereas the second one occurs in a measure-preserving discrete ergodic groupoid. 

%%%%%%%%%%%%%%%%%%%%%%%%%%%%%%%%%%%%%%%%%%%%%%%%%%%

\vspace{3mm}

\subsection{First example} 
We begin by recalling that any countable Borel equivalence relation gives rise to a discrete p.m.p.\ groupoid. More precisely, let $(X,\mu)$ be a standard probability space, and let $\mathcal R\subseteq X\times X$ be a countable Borel equivalence relation. Then $\mathcal R$ becomes a discrete measured groupoid with unit space $\mathcal R^{(0)} := {\sf Diag}(X) \cong X$ and structure maps
\begin{equation*}
 s(x,y) := (y,y)\,, \qquad t(x,y) := (x,x)\,, \qquad (x,y)(y,z) := (x,z)\,, \qquad (x,y)^{-1} := (y,x)
\end{equation*}
for all $(x,y),(y,z)\in\mathcal R$. We say that $\mathcal R$ is \emph{probability measure-preserving} (\emph{p.m.p.}) if the resulting groupoid is p.m.p.

If $X$ is countable and $\mathcal R = X\times X$ is the full equivalence relation, then a Borel system of probability measures reduces to a function $\pi\colon \mathcal R\to[0,\infty)$ such that $\sum_{x\in X} \pi(x,y)=1$ for every $y\in X$. The associated Markov operator satisfies
\begin{equation*}
 P^\pi(\xi)(x,y) = \sum_{z\in X} \xi\big((x,y)(y,z)\big)\pi(y,z)
 = \sum_{z\in X} \xi(x,z)\pi(y,z)\,,
\end{equation*}
for every $\xi\in L^2(\mathcal R,\mu_t)$; hence $P^\pi$ identifies with the (possibly infinite) matrix $[\pi(y,x)]_{x,y\in X}$.

\medskip

For what follows, fix a p.m.p.\ equivalence relation $\mathcal R_0$ on a standard probability space $(Y,\nu)$. For each $n\in\mathbb N_{\geq 1}$ let $\cS_n$ denote the full equivalence relation on $([n],\nu_n)$, where $\nu_n$ is the normalized counting measure. Recall also that if $(\cG_1,\mu_1)$ and $(\cG_2,\mu_2)$ are discrete p.m.p.\ groupoids, then their product $(\cG_1\times \cG_2,\mu_1\times \mu_2)$ is again a discrete p.m.p.\ groupoid with the obvious operations.

Consider now the equivalence relation $\mathcal R := \bigsqcup_{n\in\mathbb N} (\mathcal R_0\times \cS_n)$, defined on the space $ X := \bigsqcup_{n\in\mathbb N} (Y\times [n])$, and equipped with the measure $\mu := \sum_{n\in\mathbb N} (\nu\times \nu_n)$. With the obvious Borel structure, $\mathcal R$ becomes a discrete p.m.p.\ groupoid.

\medskip

Our construction of the Borel field $\pi$ of probability measures is based on direct sums of finite-dimensional operators (i.e., matrices) whose sequence of norms diverges. The following lemma provides the relevant building blocks.

\begin{lemma}\label{Prop: increasing norms of markov operators}
Let $n \in \mathbb{N}$ be an integer with $n>1$ and let $\delta\in(0,\tfrac12)$. Then there exists a matrix $A_\delta := [A_\delta(i,j)]_{1\le i,j\le n}\in \mathbb M_n(\mathbb C)$ with strictly positive entries and column sums $\sum_{i=1}^n A_\delta(i,j)=1$ for $1\leq j \leq n$ such that the corresponding operator norm satisfies $\|A_\delta\| > \sqrt{n}-\delta$.
\end{lemma}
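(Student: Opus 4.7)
The plan is to take $A_\delta$ to be a small strictly-positive perturbation of the rank-one column-stochastic matrix which concentrates all mass of each column on a single distinguished row. First I would fix the row index $i=1$ and a parameter $\epsilon\in(0,1/(n-1))$ (to be chosen below), and define
\begin{equation*}
A_\delta(1,j):=1-(n-1)\epsilon\,,\qquad A_\delta(i,j):=\epsilon \ \text{for } 2\le i\le n\,,
\end{equation*}
for every $1\le j\le n$. All entries are strictly positive by the constraint on $\epsilon$, and each column sums to $(1-(n-1)\epsilon)+(n-1)\epsilon=1$, so the column-stochastic condition is satisfied.

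Next I would compute the operator norm by exploiting the rank-one structure. Setting $v:=(1-(n-1)\epsilon,\epsilon,\ldots,\epsilon)^{T}\in\mathbb{C}^n$ and $\mathbf{1}:=(1,\ldots,1)^{T}\in\mathbb{C}^n$, one has $A_\delta=v\mathbf{1}^{T}$, so
\begin{equation*}
\|A_\delta\|=\|v\|\,\|\mathbf{1}\|=\sqrt{n}\cdot\sqrt{(1-(n-1)\epsilon)^2+(n-1)\epsilon^{2}}\,.
\end{equation*}
The right-hand side tends to $\sqrt{n}$ as $\epsilon\to 0^{+}$. The lower bound $\|A_\delta\|>\sqrt{n}-\delta$ is therefore equivalent to $\|v\|>1-\delta/\sqrt{n}$, so it suffices to pick any $\epsilon>0$ with
\begin{equation*}
(1-(n-1)\epsilon)^2+(n-1)\epsilon^2>\left(1-\tfrac{\delta}{\sqrt{n}}\right)^{2}\,.
\end{equation*}
Since $\delta\in(0,1/2)$ and $n\ge 2$, the target $1-\delta/\sqrt{n}$ lies strictly in $(0,1)$, so continuity at $\epsilon=0$ guarantees the existence of such an $\epsilon$. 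This yields the required estimate.

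I do not anticipate any substantive obstacle: the lemma amounts to the observation that concentrating a column-stochastic matrix on a single row saturates the norm bound $\sqrt{n}$ in the rank-one case, and the strict-positivity requirement costs only an arbitrarily small perturbation that we can absorb into the error tolerance $\delta$.
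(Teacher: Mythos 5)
Your proof is correct and follows essentially the same route as the paper: both construct a rank-one column/row-stochastic matrix depending on a small parameter $\epsilon$, compute its norm as $\sqrt{n}$ times the norm of the repeated vector, and invoke continuity at $\epsilon=0$ to beat $\sqrt{n}-\delta$. The only (cosmetic) difference is that your matrix has constant columns rather than constant rows, which in fact matches the stated column-sum normalization more literally than the paper's own construction.
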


\begin{proof}
For $\epsilon\in(0,1)$, let $B(\epsilon)$ be the matrix whose $n$ rows are identical and equal to
\begin{equation*}
 x_\epsilon := (1-\epsilon,\tfrac{\epsilon}{n-1},\ldots,\tfrac{\epsilon}{n-1})\,.
\end{equation*}
Writing $\mathbf{1}$ for the row vector of all ones, we have $B(\epsilon) = \mathbf{1}^T x_\epsilon$. Thus,
\begin{equation*}
 \|B(\epsilon)\| \geq \|\mathbf{1}\|_2 \,\|x_\epsilon\|_2
 = \sqrt{n}\,\sqrt{(1-\epsilon)^2 + \tfrac{\epsilon^2}{\,n-1\,}}\,.
\end{equation*}
The function $F(\epsilon) := (1-\epsilon)^2 + \frac{\epsilon^2}{n-1}$ is continuous on $[0,1]$ and attains its maximum at $\epsilon=0$. Hence, there exists a $\epsilon_0>0$ such that
\begin{equation*}
 \sqrt{F(\epsilon_0)} > 1 - \frac{\delta}{\sqrt{n}}\,.
\end{equation*}
Let $A_\delta:=B(\epsilon_0)$, so that $\|A_\delta\| > \sqrt{n}-\delta$.
\end{proof}

Fix $\delta\in(0,\tfrac12)$ and let $\pi_0' \colon \mathcal R_0\to[0,\infty)$ be a Borel field of probability measures. For $n\ge 1$, define a Borel field $\pi_n\colon \cS_n\to[0,\infty)$ of probability measures by setting $\pi_n(i,j):=A_\delta(i,j)$ for $1\le i,j\le n$, where $A_\delta$ is as in Lemma~\ref{Prop: increasing norms of markov operators}. These fields combine to a Borel field of probability measures $\pi: \mathcal{R} \rightarrow [0,1)$ defined to be equal to $\pi_0'\times \pi_n$ on each component $\mathcal R_0\times \cS_n$. The associated Markov operator decomposes as
\begin{equation*}
 P^\pi = \bigoplus_{n\in\mathbb N} (P^{\pi_0'} \otimes P^{\pi_n})
 \;\in\; \bigoplus_{n\in\mathbb N} \mathbb B\!\left(\ell^2(\mathcal R_0)\otimes \ell^2(\cS_n)\right)\,.
\end{equation*}
Since
\begin{equation*}
 \sup_{n\in\mathbb N} \|P^{\pi_0'}\otimes P^{\pi_n}\|
 = \|P^{\pi_0'}\|\left( \sup_{n\in\mathbb N}\|P^{\pi_n}\| \right)
 > \|P^{\pi_0'}\| \left( \sup_{n\in\mathbb N} (\sqrt{n}-\delta) \right)
 = \infty\,,
\end{equation*}
the operator $P^\pi$ is unbounded.

\medskip

The construction above enjoys several noteworthy features. First, if $(Y,\nu)$ is non-atomic, then so is the resulting unit space $(X,\mu)$. Second, if $\mathcal R_0$ is amenable, then $\mathcal R$ is amenable as well. Finally, the field $\pi$ is non-degenerate in the sense of \cite[Definition~2.1]{Berendschot2024}.

%%%%%%%%%%%%%%%%%%%%%%%%%%%%%%%%%%%%%%%%%%%%%%%%%%%

\vspace{3mm}

\subsection{Second example}

For our second example, consider the full equivalence relation $\mathcal R := \mathbb N\times \mathbb N$ on $\mathbb N$, equipped with the counting measure~$\mu$. The pair $(\mathcal R,\mu)$ forms a \emph{discrete measure-preserving} groupoid, meaning that the induced measures $\mu_s$ and $\mu_t$ introduced in Subsection~\ref{pmp} coincide; indeed, both agree with the counting measure on $\mathbb N^{2}$. Moreover, $(\mathcal R,\mu)$ is ergodic. Note, however, that $(\mathcal R,\mu)$ is not a discrete p.m.p.\ groupoid.

For $k\in\mathbb N$ define
\begin{equation*}
    I_k := \mathbb N \cap \big[\tfrac{k(k+1)}{2},\, \tfrac{(k+1)(k+2)}{2}\big)\,,
\end{equation*}
so that $\mathbb N = \bigsqcup_{k\in\mathbb N} I_k$ is a partition of the natural numbers. Define a map $\pi\colon \mathcal R\to[0,\infty)$ by
\begin{equation*}
 \pi(m,n) := 
 \begin{cases}
 1\,, & \text{if }n\in I_m\,,\\[2mm]
 0\,, & \text{otherwise},
 \end{cases}
\quad \text{for } (m,n)\in\mathcal R\,.
\end{equation*}
Then $\sum_{m\in\mathbb N}\pi(m,n)=1$ for each $n\in\mathbb N$, so $\pi$ is a (non-symmetric) Borel field of probability measures.

For each $k\in\mathbb N$ define $\xi_k\in L^2(\mathcal R,\mu_t)$ by
\begin{equation*}
 \xi_k(m,n) :=
 \begin{cases}
   \pi(k,n)\,, & \text{if } m=k\,,\\[1mm]
   0\,, & \text{otherwise},
 \end{cases}
\quad \text{for } (m,n)\in\mathcal R\,.
\end{equation*}
A direct computation shows that
\begin{equation*}
 P^\pi(\xi_k)(m,n)
   = \sum_{a\in\mathbb N} \xi_k(m,a)\,\pi(n,a)
   = 
   \begin{cases}
     \displaystyle \sum_{a\in\mathbb N}\pi(k,a)\pi(n,a)\,, & \text{if }m=k\,,\\[2mm]
     0\,, & \text{otherwise},
   \end{cases}
\end{equation*}
so that in particular,
\begin{equation*}
 P^\pi(\xi_k)(k,n)
   = \sum_{a\in\mathbb N}\pi(k,a)\pi(n,a)
   = \#(I_k\cap I_n)
   =
   \begin{cases}
     \#I_k\,, & \text{if }n=k\,,\\[1mm]
     0\,, & \text{otherwise}.
   \end{cases}
\end{equation*}
Hence,
\begin{equation*}
 \|P^\pi(\xi_k)\|_2^2
   = \sum_{m,n\in\mathbb N} |P^\pi(\xi_k)(m,n)|^2
   = \sum_{n\in\mathbb N} |P^\pi(\xi_k)(k,n)|^2
   = (\#I_k)^2
   = (k+1)^2\,,
\end{equation*}
while
\begin{equation*}
 \|\xi_k\|_2^2
   = \sum_{m,n\in\mathbb N} |\xi_k(m,n)|^2
   = \sum_{n\in\mathbb N} |\pi(k,n)|^2
   = \sum_{n\in I_k} 1
   = \#I_k
   = (k+1)\,.
\end{equation*}
Consequently,
\begin{equation*}
 \sup_{k\in\mathbb N} \frac{\|P^\pi(\xi_k)\|_2}{\|\xi_k\|_2}
   = \sup_{k\in\mathbb N} \sqrt{k+1}
   = \infty\,,
\end{equation*}
and therefore the Markov operator $P^\pi$ is not a bounded operator on $L^2(\mathcal R,\mu_t)$.

%%%%%%%%%%%%%%%%%%%%%%%%%%%%%%%%%%%%%%%%%%%%%%%%%%%

%%%%%%%%%%%%%%%%%%%%%%%%%%%%%%%%%%%%%%%%%%%%%%%%%%%

\vspace{3mm}

\section*{Acknowledgments}

\vspace{3mm}

S.C. and M.D. are supported by the ERC advanced grant 101141693. F.F. thankfully acknowledges support from the Simons Foundation Dissertation Fellowship SFI-MPS-SDF-00015100. He also thanks Professor Nicol\'as Matte Bon for the interesting discussions about Kesten's criterion. The authors would like to thank James Harbour for initial discussions and Ben Hayes for the discussions surrounding the cospectral radius on equivalence relations. Moreover, the authors also wish to thank Tey Berendschot for proposing the problem and the conferences ``YMC$^*$A'' at SDU and ``Orbit equivalence and topological and measurable dynamics'' at CIRM for hosting part of this research.

%%%%%%%%%%%%%%%%%%%%%%%%%%%%%%%%%%%%%%%%%%%%%%%%%%%

\vspace{3mm}

\emergencystretch3em
\printbibliography

@article {AFH25,
    AUTHOR = {Ab\'ert, M. and Fraczyk, M. and Hayes,
              B.},
     TITLE = {Growth dichotomy for unimodular random rooted trees},
   JOURNAL = {Ann. Probab.},
  FJOURNAL = {The Annals of Probability},
    VOLUME = {53},
      YEAR = {2025},
    NUMBER = {5},
     PAGES = {1627--1644},
}

@article {AlLy07,
    AUTHOR = {Aldous, D. and Lyons, R.},
     TITLE = {Processes on unimodular random networks},
   JOURNAL = {Electron. J. Probab.},
  FJOURNAL = {Electronic Journal of Probability},
    VOLUME = {12},
      YEAR = {2007},
    NUMBER = {54},
     PAGES = {1454--1508},
}

@article {AbBi22,
    AUTHOR = {Ab\'ert, M. and Biringer, I.},
     TITLE = {Unimodular measures on the space of all {R}iemannian
              manifolds},
   JOURNAL = {Geom. Topol.},
  FJOURNAL = {Geometry \& Topology},
    VOLUME = {26},
      YEAR = {2022},
    NUMBER = {5},
     PAGES = {2295--2404},
}

@misc{Ha24,
      TITLE={Coamenability and cospectral radius for orbit equivalence relations}, 
      author={B. Hayes},
      year={2024},
      archivePrefix={arXiv},  
      eprint={2410.16480},
      primaryClass={math.DS},
      
}

@article {AFH24,
    AUTHOR = {Abert, M. and Fraczyk, M. and Hayes, B.},
     TITLE = {Co-spectral radius for countable equivalence relations},
   JOURNAL = {Ergodic Theory Dynam. Systems},
  FJOURNAL = {Ergodic Theory and Dynamical Systems},
    VOLUME = {44},
      YEAR = {2024},
    NUMBER = {12},
     PAGES = {3385--3427},
}

@article {Ka05,
    AUTHOR = {Kaimanovich, V. A.},
     TITLE = {Amenability and the {L}iouville property},
   JOURNAL = {Israel J. Math.},
  FJOURNAL = {Israel Journal of Mathematics},
    VOLUME = {149},
      YEAR = {2005},
     PAGES = {45--85},
}

@incollection {BuKa21,
    AUTHOR = {B\"uhler, T. and Kaimanovich, V. A.},
     TITLE = {Amenability of groupoids and asymptotic invariance of
              convolution powers},
 BOOKTITLE = {Topology, geometry, and dynamics---{V}. {A}.
              {R}okhlin-{M}emorial},
    SERIES = {Contemp. Math.},
    NUMBER = {772},
     PAGES = {69--92},
 PUBLISHER = {Amer. Math. Soc., [Providence], RI},
      YEAR = {2021},
}

@article {AGV14,
    AUTHOR = {Ab\'ert, M. and Glasner, Y. and Vir\'ag, B.},
     TITLE = {Kesten's theorem for invariant random subgroups},
   JOURNAL = {Duke Math. J.},
  FJOURNAL = {Duke Mathematical Journal},
    VOLUME = {163},
      YEAR = {2014},
    NUMBER = {3},
     PAGES = {465--488},
}

@article {Kesten1959a,
    AUTHOR = {Kesten, H.},
     TITLE = {Symmetric random walks on groups},
   JOURNAL = {Trans. Amer. Math. Soc.},
  FJOURNAL = {Transactions of the American Mathematical Society},
    VOLUME = {92},
      YEAR = {1959},
     PAGES = {336--354},
    SORTTITLE = "a",
}

@article {Kesten1959b,
    AUTHOR = {Kesten, H.},
     TITLE = {Full {B}anach mean values on countable groups},
   JOURNAL = {Math. Scand.},
  FJOURNAL = {Mathematica Scandinavica},
    VOLUME = {7},
      YEAR = {1959},
     PAGES = {146--156},
    SORTTITLE = "b",
}

@article {Ana11,
    AUTHOR = {Anantharaman-Delaroche, C.},
    TITLE = {Old and new about treeability and the Haagerup property for measured groupoids},
    YEAR = {2011},
    NUMBER = {hal-00596887},
}

@incollection {Kai01,
    AUTHOR = {Kaimanovich, V. A.},
    TITLE = {Equivalence relations with amenable leaves need not be
              amenable},
    YEAR = {2001},
    BOOKTITLE = {Topology, ergodic theory, real algebraic geometry: Rokhlin’s Memorial},
    SERIES = {Amer. Math. Soc. Transl. Ser. 2},
    NUMBER = {202},
    PAGES = {151--166},
    PUBLISHER = {Amer. Math. Soc., Providence, RI},
}

@article{Neumann1929,
author = {von Neumann, J.},
journal = {Fundamenta Mathematicae},
number = {1},
pages = {73-116},
title = {Zur allgemeinen Theorie des Masses},
volume = {13},
year = {1929},
}

@article {Folner1955,
    AUTHOR = {F\o lner, E.},
     TITLE = {On groups with full {B}anach mean value},
   JOURNAL = {Math. Scand.},
  FJOURNAL = {Mathematica Scandinavica},
    VOLUME = {3},
      YEAR = {1955},
     PAGES = {243--254},
}

@article {Lance1973,
    AUTHOR = {Lance, C.},
     TITLE = {On nuclear {$C\sp{\ast} $}-algebras},
   JOURNAL = {J. Functional Analysis},
  FJOURNAL = {Journal of Functional Analysis},
    VOLUME = {12},
      YEAR = {1973},
     PAGES = {157--176},
}

@book {Reiter1968,
    AUTHOR = {Reiter, H.},
     TITLE = {Classical harmonic analysis and locally compact groups},
 PUBLISHER = {Clarendon Press, Oxford},
      YEAR = {1968},
}

@article {Day1956,
    AUTHOR = {Kaimanovich, V. A.},
     TITLE = {Amenable semigroups},
   JOURNAL = {Illinois J. Math.},
  FJOURNAL = {Illinois Journal of Mathematics},
    VOLUME = {1},
      YEAR = {1957},
     PAGES = {509--544},
}

@book {Pier1984,
    AUTHOR = {Pier, J.},
     TITLE = {Amenable locally compact groups},
    SERIES = {Pure and Applied Mathematics (New York)},
      NOTE = {A Wiley-Interscience Publication},
 PUBLISHER = {John Wiley \& Sons, Inc., New York},
      YEAR = {1984},
}

@article {Zimmer1978,
    AUTHOR = {Zimmer, R. J.},
     TITLE = {Amenable ergodic group actions and an application to {P}oisson
              boundaries of random walks},
   JOURNAL = {J. Functional Analysis},
  FJOURNAL = {Journal of Functional Analysis},
    VOLUME = {27},
      YEAR = {1978},
    NUMBER = {3},
     PAGES = {350--372},
}

@article {Zimmer1977a,
    AUTHOR = {Zimmer, R. J.},
     TITLE = {Hyperfinite factors and amenable ergodic actions},
   JOURNAL = {Invent. Math.},
  FJOURNAL = {Inventiones Mathematicae},
    VOLUME = {41},
      YEAR = {1977},
    NUMBER = {1},
     PAGES = {23--31},
}

@article {Zimmer1977b,
    AUTHOR = {Zimmer, R. J.},
     TITLE = {On the von {N}eumann algebra of an ergodic group action},
   JOURNAL = {Proc. Amer. Math. Soc.},
  FJOURNAL = {Proceedings of the American Mathematical Society},
    VOLUME = {66},
      YEAR = {1977},
    NUMBER = {2},
     PAGES = {289--293},
}

@book {Co90,
    AUTHOR = {Conway, J. B.},
     TITLE = {A course in functional analysis},
    SERIES = {Graduate Texts in Mathematics},
    NUMBER = {96},
   EDITION = {Second},
 PUBLISHER = {Springer-Verlag, New York},
      YEAR = {1990},
}

@article {Ha78,
    AUTHOR = {Hahn, P.},
     TITLE = {The regular representations of measure groupoids},
   JOURNAL = {Trans. Amer. Math. Soc.},
  FJOURNAL = {Transactions of the American Mathematical Society},
    VOLUME = {242},
      YEAR = {1978},
     PAGES = {35--72},
}

@book {Renault1980,
    AUTHOR = {Renault, J.},
     TITLE = {A groupoid approach to {$C\sp{\ast} $}-algebras},
    SERIES = {Lecture Notes in Mathematics},
    NUMBER = {793},
 PUBLISHER = {Springer, Berlin},
      YEAR = {1980},
}

@book {Anantharaman-DelarocheRenault2000,
    AUTHOR = {Anantharaman-Delaroche, C. and Renault, J.},
     TITLE = {Amenable groupoids},
    SERIES = {Monographies de L'Enseignement Math\'ematique},
    NUMBER = {36},
 PUBLISHER = {L'Enseignement Math\'ematique, Geneva},
      YEAR = {2000},
     PAGES = {196},
}

@article {Corlette2004,
    AUTHOR = {Corlette, K. and Hern\'andez Lamoneda, L. and Iozzi,
              A.},
     TITLE = {A vanishing theorem for the tangential de {R}ham cohomology of
              a foliation with amenable fundamental groupoid},
   JOURNAL = {Geom. Dedicata},
  FJOURNAL = {Geometriae Dedicata},
    VOLUME = {103},
      YEAR = {2004},
     PAGES = {205--223},
}

@misc{Berendschot2024,
  title={The Choquet-Deny Property for Groupoids}, 
      author={T. Berendschot and S. Chakraborty and M. Donvil and S.-J. Kim and M. Klisse},
      year={2024},
      eprint={2406.05004},
      archivePrefix={arXiv},
      primaryClass={math.FA},
}

@article {Li2018,
    AUTHOR = {Chu, C. and Li, X.},
     TITLE = {Amenability, {R}eiter's condition and {L}iouville property},
   JOURNAL = {J. Funct. Anal.},
  FJOURNAL = {Journal of Functional Analysis},
    VOLUME = {274},
      YEAR = {2018},
    NUMBER = {12},
     PAGES = {3291--3324},
}

@article {VershikKaimanovich79,
    AUTHOR = {Kaimanovich, V. A. and Vershik, A. M.},
     TITLE = {Random walks on groups: boundary, entropy, uniform
              distribution},
   JOURNAL = {Dokl. Akad. Nauk SSSR},
  FJOURNAL = {Doklady Akademii Nauk SSSR},
    VOLUME = {249},
      YEAR = {1979},
    NUMBER = {1},
     PAGES = {15--18},
}

@article {KaimanovichVershik83,
    AUTHOR = {Kaimanovich, V. A. and Vershik, A. M.},
     TITLE = {Random walks on discrete groups: boundary and entropy},
   JOURNAL = {Ann. Probab.},
  FJOURNAL = {The Annals of Probability},
    VOLUME = {11},
      YEAR = {1983},
    NUMBER = {3},
     PAGES = {457--490},
}

@incollection {Furstenberg73,
    AUTHOR = {Furstenberg, H.},
     TITLE = {Boundary theory and stochastic processes on homogeneous
              spaces},
 BOOKTITLE = {Harmonic analysis on homogeneous spaces ({P}roc. {S}ympos.
              {P}ure {M}ath., {V}ol. {XXVI}, {W}illiams {C}oll.,
              {W}illiamstown, {M}ass., 1972)},
    SERIES = {Proc. Sympos. Pure Math.},
     PAGES = {193--229},
 PUBLISHER = {Amer. Math. Soc., Providence, RI},
      YEAR = {1973},
}

@article {Stadlbauer2013,
    AUTHOR = {Stadlbauer, M.},
     TITLE = {An extension of {K}esten's criterion for amenability to
              topological {M}arkov chains},
   JOURNAL = {Adv. Math.},
  FJOURNAL = {Advances in Mathematics},
    VOLUME = {235},
      YEAR = {2013},
     PAGES = {450--468},
}

@article {Banica1999,
    AUTHOR = {Banica, T.},
     TITLE = {Representations of compact quantum groups and subfactors},
   JOURNAL = {J. Reine Angew. Math.},
  FJOURNAL = {Journal f\"ur die Reine und Angewandte Mathematik. [Crelle's
              Journal]},
    VOLUME = {509},
      YEAR = {1999},
     PAGES = {167--198},
}

@article {BeLySc15,
    AUTHOR = {Benjamini, I. and Lyons, R. and Schramm, O.},
     TITLE = {Unimodular random trees},
   JOURNAL = {Ergodic Theory Dynam. Systems},
  FJOURNAL = {Ergodic Theory and Dynamical Systems},
    VOLUME = {35},
      YEAR = {2015},
    NUMBER = {2},
     PAGES = {359--373},
}

@article {Ros81,
    AUTHOR = {Rosenblatt, J.},
    TITLE = {Ergodic and mixing random walks on locally compact groups},
    JOURNAL = {Math. Ann.},
    VOLUME = {257},
    YEAR = {1981},
    PAGES = {31--42},
}

@book {Ol05,
    AUTHOR = {Ollivier, Y.},
     TITLE = {A {J}anuary 2005 invitation to random groups},
    SERIES = {Ensaios Matem\'aticos [Mathematical Surveys]},
    NUMBER = {10},
 PUBLISHER = {Sociedade Brasileira de Matem\'atica, Rio de Janeiro},
      YEAR = {2005},
}

@article{ST10,
    AUTHOR = {Shalom, Y. and Tao, T.},
    TITLE = {A finitary version of Gromov's polynomial growth theorem},
    JOURNAL = {Geom. Funct. Anal.},
    YEAR = {2010},
    PAGES = {1502--1547},
}

@article{BV25,
    author = {Berendschot, T. and Vaes, S.},
    title = {Measure equivalence embeddings of free groups and free group factors},
    journal = {Ann. Scient. \'Ec. Norm. Sup.},
    year = {2025},
    PAGES = {389--418},
}

\end{document}